\theoremstyle{plain}
\newtheorem{theorem}{Theorem}[section]
\newtheorem{corollary}[theorem]{Corollary}
\newtheorem{proposition}[theorem]{Proposition}
\newtheorem{lemma}[theorem]{Lemma}
\theoremstyle{definition}
\newtheorem{definition}[theorem]{Definition}
\theoremstyle{remark}
\newtheorem{remark}[theorem]{Remark}
\numberwithin{equation}{section}\theoremstyle{plain}
\newcommand{\I}{\mathcal{I}}
\renewcommand{\1}{\textbf{1}}
\newcommand{\A}{{\mathcal A}}
\newcommand{\B}{{\mathcal B}}
\newcommand{\C}{{\mathcal C}}
\newcommand{\D}{{\mathcal D}}
\newcommand{\Z}{{\mathcal Z}}
\newcommand{\Rep}{\operatorname{Rep}}
\newcommand\Irr{\operatorname{Irr}}
\newcommand\FPdim{\operatorname{FPdim}}
\newcommand\vect{\operatorname{Vec}}
\newcommand\svect{\operatorname{sVec}}
\newcommand\id{\operatorname{id}}
\newcommand\Hom{\operatorname{Hom}}
\begin{document}
\title[Generalized near-group fusion categories]{On generalized near-group fusion categories}
\author{Jingcheng Dong}
\email{jcdong@nuist.edu.cn}
\address{College of Mathematics and Statistics, Nanjing University of Information Science and Technology, Nanjing 210044, China}


\keywords{Generalized near-group fusion category; exact factorization; slightly degenerate; Yang-Lee category; Ising category}

\subjclass[2010]{18D10}

\date{\today}

\begin{abstract}
In this paper, we study the structure of a generalized near-group fusion category and classified it when it is slightly degenerate.
\end{abstract}

\maketitle

\section{Introduction}\label{sec1}
Let $\C$ be a fusion category, and let $G$ be the group generated by invertible simple objects of $\C$. Then there is an action of $G$ on the set of non-isomorphic non-invertible simple objects by left tensor product. If this action is transitive then $\C$ is called a generalized near-group fusion category in \cite{Thornton2012Generalized}. In his thesis \cite{Thornton2012Generalized}, Thornton proved that $\C$ is $\varphi$-pseudounitary and classified $\C$ when it is symmetric or modular.

\medbreak
Let $\C$ be a generalized near-group fusion category. Then for every non-invertible simple object $X$, $X\otimes X^*$ admits the same decomposition (see Section \ref{sec3}):
\begin{equation}
\begin{split}
X\otimes X^*=\bigoplus_{h\in \Gamma}h\oplus k_1X_{1}\oplus \cdots\oplus k_nX_{n},
\end{split}\nonumber
\end{equation}
where $\{X_1,\cdots,X_n\}$ is a full list of non-isomorphic non-invertible simple objects of $\C$, $\Gamma$ is the stabilizer of $X$ under the action of $G$. In this paper, we shall say that $\C$ is a generalized near-group fusion category of type $(G,\Gamma,k_1,\cdots,k_n)$.  If $(k_1,\cdots,k_n)=(0,\cdots,0)$ then $\C$ is a generalized Tambara-Yamagami fusion category introduced in \cite{liptrap2010generalized}. If $\C$ exactly has one non-invertible simple object, then $G=\Gamma$ and $\C$ is a near-group fusion category introduced in \cite{siehler2003near}. The main goal of this paper is to study the structure of $\C$ and classify it when it is slightly degenerate.

\medbreak
The paper is organized as follows. In Section \ref{sec2}, we recall some basic results and prove some basic lemmas which will be used throughout.

In Section \ref{sec3}, we study the fusion rules, non-pointed fusion subcategories of a generalized near-group fusion category $\C$. In particular, we obtain that every component $\C_g$ of the universal grading exactly contains the simple objects $\alpha_g, \alpha_g\otimes Y_1, \cdots, \alpha_g\otimes Y_s $, where $\alpha_g$ is an invertible simple object in $\C_g$ and $\1,Y_{1},\cdots,Y_{s}$ is a list of all nonisomorphic simple object in the adjoint subcategory $\C_{ad}$.

In Section \ref{sec4}, we study the slightly degenerate generalized near-group fusion categories. Our result shows that slightly degenerate generalized near-group fusion categories fit into four classes.

\section{Preliminaries}\label{sec2}
A fusion category $\C$ is a $\mathbb{C}$-linear semisimple rigid tensor category with finitely many isomorphism classes of simple objects, finite-dimensional vector space of morphisms and the unit object $\1$ is simple.

\subsection{Invertible simple objects}\label{sec2.1}
Let $\C$ be a fusion category. The tensor product in $\C$ induces a ring structure on the Grothendieck ring $K(\C)$. By \cite[Section 8]{etingof2005fusion}, there is a unique ring homomorphism $\FPdim:K(\C)\to\mathbb{R}$ such that $\FPdim(X)\geq 1$ for all nonzero $X\in \C$. We call $\FPdim(X)$ the Frobenius-Perron dimension of $X$. The Frobenius-Perron dimension of $\C$ is defined by
$\FPdim(\C)=\sum_{X\in\Irr(\C)}\FPdim(X)^2$, where $\Irr(\C)$ is the set of isomorphism classes of simple objects in $\C$.

A simple object $X\in \C$ is called invertible if $X\otimes X^*\cong \1$, where $X^*$ is the dual of $X$. This implies that $X$ is invertible if and only if $\FPdim(X)=1$. A fusion category $\C$ is called pointed if every element in $\Irr(\C)$ is invertible. Let $\C_{pt}$ be the fusion subcategory generated by all invertible simple objects in $\C$. Then $\C_{pt}$ is the largest pointed fusion subcategory of $\C$.

Let $G(\C)$ be the group generated by $\Irr(\C_{pt})$. Then $G(\C)$ admits an action on the set $\Irr(\C)$ by left tensor product. Let $G[X]$ be the stabilizer of any $X\in \Irr(\C)$ under this action. Hence for any simple object $X$, we have a decomposition
\begin{equation}\label{decom1}
\begin{split}
X\otimes X^*=\bigoplus_{g\in G[X]}g\oplus\sum_{Y\in \Irr(\C)/G[X]}  \dim\Hom(Y,X\otimes X^*)Y.
\end{split}
\end{equation}

\subsection{Group extensions of fusion categories}\label{sec2.2}
Let $G$ be a finite group. A fusion category $\C$ is graded by $G$  if $\C$ has a direct sum of full abelian subcategories $\C=\oplus_{g\in G}\C_g$ such that $(\C_g)^*=\C_{g-1}$ and $\C_g\otimes\C_h\subseteq\C_{gh}$ for all $g,h\in G$. If $\C_g\neq 0$ for any $g\in G$ then this grading is called faithful. If this is the case we say that $\C$ is a $G$-extension of the trivial component $\C_e$.

If $\C=\oplus_{g\in G}\C_g$ is faithful then \cite[Proposition 8.20]{etingof2005fusion} shows that
\begin{equation}\label{FPdimgrading}
\begin{split}
\FPdim(\C_g)=\FPdim(\C_h),\,\, \FPdim(\C)=|G| \FPdim(\C_e), \forall g,h\in G.
\end{split}
\end{equation}

It follows from \cite{gelaki2008nilpotent} that every fusion category $\C$ has a canonical faithful grading $\C=\oplus_{g\in \mathcal{U}(\C)}\C_g$ with trivial component $\C_e=\C_{ad}$, where $\C_{ad}$ is the adjoint subcategory of $\C$ generated by simple objects in $X\otimes X^*$ for all $X\in \Irr(\C)$. This grading is called the universal grading of $\C$, and $\mathcal{U}(\C)$ is called the universal grading group of $\C$.

\subsection{M\"{u}ger centralizer}\label{sec2.3}
A braided fusion category $\C$ is a fusion category admitting a braiding $c$, where the braiding is a family of natural isomorphisms: $c_{X,Y}$:$X\otimes Y\rightarrow Y\otimes X$ satisfying the hexagon axioms for all $X,Y\in\C$.

Let $\D$ be a fusion subcategory of a braided fusion category $\C$. Then the M\"{u}ger centralizer $\D'$ of $\D$ in $\C$ is the fusion subcategory generated by
$$\D'=\{Y\in\C|c_{Y,X}c_{X,Y}=\id_{X\otimes Y}\, \mbox{for all}\, X\in\D\}.$$
The M\"{u}ger center $\mathcal{Z}_2(\C)$ of $\C$ is the M\"{u}ger centralizer $\C'$ of $\C$.

\begin{definition}
A braided fusion category $\C$ is called non-degenerate if its M\"{u}ger center $\mathcal{Z}_2(\C)=\vect$ is trivial.
\end{definition}

The following theorem implies that a braided fusion category containing a non-degenerate subcategory admits a decomposition in terms of Deligne tensor product. In the case when $\C$ is modular, it is due to M\"{u}ger \cite[Theorem 4.2]{muger2003structure}

\begin{theorem}{\cite[Theorem 3.13]{drinfeld2010braided}}\label{MugerThm}
Let $\C$ be a braided fusion category and $\D$ be a non-degenerate subcategory of $\C$. Then $\C$ is braided equivalent to $\D\boxtimes \D'$, where $\boxtimes$ stands for the Deligne tensor product.
\end{theorem}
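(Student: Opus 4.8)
The plan is to produce an explicit braided tensor functor $F\colon\D\boxtimes\D'\to\C$ and then show it is an equivalence. On objects set $F(X\boxtimes Y)=X\otimes Y$; since $(X,Y)\mapsto X\otimes Y$ is exact in each variable, the universal property of the Deligne tensor product extends it to an additive functor. The tensor product on $\D\boxtimes\D'$ being $(X_1\boxtimes Y_1)\otimes(X_2\boxtimes Y_2)=(X_1\otimes X_2)\boxtimes(Y_1\otimes Y_2)$, a monoidal structure on $F$ amounts to natural isomorphisms $X_1\otimes Y_1\otimes X_2\otimes Y_2\xrightarrow{\ \sim\ }X_1\otimes X_2\otimes Y_1\otimes Y_2$, and for these I would take the braiding $c_{Y_1,X_2}$ applied in the middle two factors. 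Unwinding the associativity coherence for this structure reduces it to the relation $c_{X_2,Y_1}c_{Y_1,X_2}=\id$, which holds exactly because $Y_1\in\D'$ centralizes $X_2\in\D$; the same relation makes $F$ compatible with the braidings (the braiding of $\D\boxtimes\D'$ being componentwise), so $F$ is a braided tensor functor.

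Next I would check that $F$ is fully faithful, which by semisimplicity reduces to the identity $\dim\Hom_\C(X_1\otimes Y_1,X_2\otimes Y_2)=\delta_{X_1,X_2}\delta_{Y_1,Y_2}$ for simple $X_1,X_2\in\D$ and $Y_1,Y_2\in\D'$. Expanding via rigidity in $\C$, this Hom-space has dimension $\sum_{A\in\Irr(\D)}\dim\Hom_\D(A,\,X_2^*\otimes X_1)\cdot\dim\Hom_\C(A,\,Y_2\otimes Y_1^*)$, and the second factor counts the multiplicity of the simple object $A\in\D$ inside $Y_2\otimes Y_1^*\in\D'$, hence vanishes unless $A\in\D\cap\D'$. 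Since any object of $\D$ that also lies in $\D'$ centralizes all of $\D$, the non-degeneracy of $\D$ gives $\D\cap\D'\subseteq\mathcal{Z}_2(\D)=\vect$, so only $A\cong\1$ survives and the sum collapses to $\dim\Hom_\D(X_1,X_2)\cdot\dim\Hom_{\D'}(Y_1,Y_2)$, as desired.

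Finally, a fully faithful tensor functor identifies its source with a fusion subcategory of its target, hence is an equivalence once the two have equal Frobenius--Perron dimension. As $\FPdim(\D\boxtimes\D')=\FPdim(\D)\FPdim(\D')$, it remains to establish $\FPdim(\D)\FPdim(\D')=\FPdim(\C)$. For this I would invoke the general relation $\FPdim(\D)\FPdim(\D')=\FPdim(\C)\,\FPdim(\D\cap\mathcal{Z}_2(\C))$ valid for any fusion subcategory of a braided fusion category, combined with $\D\cap\mathcal{Z}_2(\C)\subseteq\mathcal{Z}_2(\D)=\vect$, which once more uses non-degeneracy of $\D$.

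I expect this last step --- the dimension count, essentially Müger's double-centralizer theorem $\D''=\D$ --- to be the real obstacle: it is the one ingredient where non-degeneracy does quantitative work and where one must appeal to the structure theory of centralizers rather than to formal manipulations; constructing $F$ and proving it fully faithful is otherwise routine bookkeeping with braidings and duals.
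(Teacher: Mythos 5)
The paper does not prove this statement; it is imported verbatim as \cite[Theorem 3.13]{drinfeld2010braided}, so there is no internal proof to compare against. Your sketch reproduces the standard argument from that reference (and from M\"uger's original modular case): build the braided tensor functor $X\boxtimes Y\mapsto X\otimes Y$ with tensor structure given by the braiding, prove full faithfulness by the Hom-space computation through $\D\cap\D'=\mathcal{Z}_2(\D)=\vect$, and conclude by the dimension formula $\FPdim(\D)\FPdim(\D')=\FPdim(\C)\FPdim(\D\cap\mathcal{Z}_2(\C))$. All three steps are correct, and you rightly identify the dimension formula (DGNO Theorem 3.14, proved via induction to the Drinfeld center) as the genuinely nontrivial input. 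One small inaccuracy: the relation $c_{X_2,Y_1}c_{Y_1,X_2}=\id$ is not what makes the monoidal coherence of $F$ work --- that coherence follows from the hexagon axioms and naturality alone, for any pair of subcategories --- rather, the centralizing condition is needed precisely so that $F$ intertwines the componentwise braiding of $\D\boxtimes\D'$ with the braiding of $\C$. This misattribution does not affect the validity of the argument.
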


A braided fusion category $\C$ is called symmetric if $\mathcal{Z}_2(\C)=\C$.  A symmetric fusion category $\C$ is called Tannakian if there exists a finite group $G$ such that $\C$ is equivalent to $\Rep(G)$ as braided fusion categories.

By \cite[Corollary 2.50]{drinfeld2010braided}, a symmetric fusion category $\C$ is a $\mathbb{Z}_2$-extension of its maximal Tannakian subcategory. In particular, if $\FPdim(\C)$ is odd then $\C$ is automatically Tannakian.

Symmetric categories are completely degenerate categories, while non-degenerate fusion categories are completely non-degenerate. Between these two extremes, we also consider the following case.

\begin{definition}
A braided fusion category $\C$ is called slightly degenerate if its M\"{u}ger center $\mathcal{Z}_2(\C)$ is equivalent, as a symmetric category, to the category $\svect$ of super vector spaces.
\end{definition}

\begin{lemma}{\cite[Proposition 2.5]{Dong2018extensions}}\label{Cpt_of_sligdegen}
Let $\C$ be a slightly degenerate braided fusion category. Then one of the following holds true.

(1)\, $\FPdim(\C_{pt})=|\mathcal{U}(\C)|$ and $\mathcal{Z}_2(\C) \nsubseteq \C_{ad}$.

(2)\, $\FPdim(\C_{pt})=2|\mathcal{U}(\C)|$ and $\mathcal{Z}_2(\C)\subseteq\mathcal{Z}_2(\C_{ad})=\mathcal{Z}_2(\C_{ad}^{'})$.
\end{lemma}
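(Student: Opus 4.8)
\emph{Setup and tools.} The plan is to read off both cases from the way M\"uger centralizers interact with the universal grading. First note that $\mathcal{Z}_2(\C)\cong\svect$ is pointed, so $\mathcal{Z}_2(\C)\subseteq\C_{pt}$; write $\mathcal{Z}_2(\C)=\{\1,\delta\}$ with $\delta\otimes\delta\cong\1$. Since $\svect$ has no fusion subcategory other than $\vect$ and itself, exactly one of the following occurs: $\mathcal{Z}_2(\C)\nsubseteq\C_{ad}$, in which case $\C_{ad}\cap\mathcal{Z}_2(\C)=\vect$, or $\mathcal{Z}_2(\C)\subseteq\C_{ad}$, in which case $\C_{ad}\cap\mathcal{Z}_2(\C)=\svect$; these will turn out to be the cases (1) and (2). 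I will use three standard facts about a braided fusion category $\C$, all available from the theory in \cite{drinfeld2010braided}: M\"uger's dimension formula $\FPdim(\D)\FPdim(\D')=\FPdim(\C)\FPdim(\D\cap\mathcal{Z}_2(\C))$ for every fusion subcategory $\D$; the double-centralizer identity $\D''=\D\vee\mathcal{Z}_2(\C)$, where $\vee$ denotes the join of fusion subcategories; and the identity $(\C_{pt})'=\C_{ad}\vee\mathcal{Z}_2(\C)$. Together with \eqref{FPdimgrading}, written as $\FPdim(\C)=|\mathcal{U}(\C)|\,\FPdim(\C_{ad})$, these will give everything.

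\emph{The dimension statements.} By the third fact, $(\C_{pt})'=\C_{ad}\vee\mathcal{Z}_2(\C)$; since $\mathcal{Z}_2(\C)$ is central and $2$-dimensional, this subcategory has Frobenius--Perron dimension $2\FPdim(\C_{ad})$ when $\delta\notin\C_{ad}$ and $\FPdim(\C_{ad})$ when $\delta\in\C_{ad}$. Applying M\"uger's formula to $\D=\C_{pt}$ and using $\C_{pt}\cap\mathcal{Z}_2(\C)=\mathcal{Z}_2(\C)$ gives
\[
\FPdim(\C_{pt})\,\FPdim\bigl((\C_{pt})'\bigr)=2\FPdim(\C)=2|\mathcal{U}(\C)|\,\FPdim(\C_{ad}).
\]
Dividing by $\FPdim\bigl((\C_{pt})'\bigr)$ yields $\FPdim(\C_{pt})=|\mathcal{U}(\C)|$ when $\mathcal{Z}_2(\C)\nsubseteq\C_{ad}$ and $\FPdim(\C_{pt})=2|\mathcal{U}(\C)|$ when $\mathcal{Z}_2(\C)\subseteq\C_{ad}$, i.e.\ the numerical assertions of (1) and (2).

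\emph{The M\"uger-center assertions of (2).} Assume $\delta\in\C_{ad}$. Since $\delta\in\mathcal{Z}_2(\C)$ it centralizes every object of $\C$, in particular every object of $\C_{ad}$, and as $\delta\in\C_{ad}$ this means $\delta\in\mathcal{Z}_2(\C_{ad})$; hence $\mathcal{Z}_2(\C)\subseteq\mathcal{Z}_2(\C_{ad})$. For the remaining equality I compute $\mathcal{Z}_2\bigl((\C_{ad})'\bigr)=(\C_{ad})'\cap\bigl((\C_{ad})'\bigr)'=(\C_{ad})'\cap(\C_{ad})''$, and by the double-centralizer identity $(\C_{ad})''=\C_{ad}\vee\mathcal{Z}_2(\C)=\C_{ad}$ here (because $\mathcal{Z}_2(\C)=\{\1,\delta\}\subseteq\C_{ad}$); therefore $\mathcal{Z}_2\bigl((\C_{ad})'\bigr)=(\C_{ad})'\cap\C_{ad}=\mathcal{Z}_2(\C_{ad})$, which completes (2).

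\emph{The main obstacle.} Everything rests on the identity $(\C_{pt})'=\C_{ad}\vee\mathcal{Z}_2(\C)$ (equivalently on its dual $(\C_{ad})'=\C_{pt}\vee\mathcal{Z}_2(\C)$). In an arbitrary braided fusion category $(\C_{pt})'$ can be far larger than $\C_{ad}$ --- for instance $(\C_{pt})'=\C$ when $\C=\Rep(S_3)$ --- so without controlling this centralizer the dimension count breaks down; what makes it rigid here is precisely that slight degeneracy forces the correction term $\mathcal{Z}_2(\C)$ to be the two-object category $\svect$. If one prefers a proof not quoting this identity, the point to establish is that no simple object of $\C$ lying outside $\C_{ad}\vee\mathcal{Z}_2(\C)$ can centralize all invertible objects: for a candidate simple $X$ one is led to locate $X\otimes X^*$ inside $\mathcal{Z}_2(\C_{ad})$ and to exploit the smallness of $\mathcal{Z}_2(\C)=\svect$ together with the structure of the universal grading, and this is the genuine technical core of the statement.
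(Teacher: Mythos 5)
The paper itself offers no proof of this lemma: it is imported verbatim from \cite[Proposition 2.5]{Dong2018extensions}, so there is nothing internal to compare your argument against. On its own merits your proof is correct, and it is the natural route. The three inputs you rely on are all quotable from \cite{drinfeld2010braided}: the dimension formula $\FPdim(\D)\FPdim(\D')=\FPdim(\C)\FPdim(\D\cap\mathcal{Z}_2(\C))$ (Theorem 3.14), the double-centralizer identity $\D''=\D\vee\mathcal{Z}_2(\C)$, and the identity $(\C_{pt})'=\C_{ad}\vee\mathcal{Z}_2(\C)$, equivalently $(\C_{ad})'=\C_{pt}\vee\mathcal{Z}_2(\C)$ (Corollary 3.27 there; note the two forms are interchangeable precisely by the double-centralizer identity, so it does not matter which one the reference states). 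Given these, the rest checks out: the dichotomy $\delta\in\C_{ad}$ versus $\delta\notin\C_{ad}$ is exhaustive and exclusive because $\svect$ has no proper nontrivial fusion subcategory; $\FPdim(\C_{ad}\vee\mathcal{Z}_2(\C))=2\FPdim(\C_{ad})$ in the second case because $\Irr(\C_{ad})$ and $\delta\otimes\Irr(\C_{ad})$ are disjoint when $\delta\notin\C_{ad}$ (if $\delta\otimes Y\cong Y'$ with $Y,Y'\in\C_{ad}$ then $\delta\subseteq Y'\otimes Y^*\in\C_{ad}$); and the computation $\mathcal{Z}_2(\C_{ad}')=\C_{ad}'\cap\C_{ad}''=\C_{ad}'\cap\C_{ad}=\mathcal{Z}_2(\C_{ad})$ in case (2) is exactly right. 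You correctly identify that the only non-elementary ingredient is $(\C_{pt})'=\C_{ad}\vee\mathcal{Z}_2(\C)$; since that is an established theorem rather than something you need to reprove, your write-up is complete, and it almost certainly mirrors the proof in the cited source.
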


Let $\Irr_{\alpha}(\C)$ be the set of non-isomorphic simple objects of Frobenius-Perron dimension $\alpha$.

\begin{lemma}\label{slight_degenerate}
Let $\C$ be a braided fusion category. Suppose that the M\"{u}ger center $\mathcal{Z}_2(\C)$ contains the category $\svect$ of super vector spaces. Then the cardinal number of $\Irr_{\alpha}(\C)$ is even for every $\alpha$.
\end{lemma}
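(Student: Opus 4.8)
The plan is to exhibit, for each value $\alpha$, a fixed-point-free involution of the finite set $\Irr_\alpha(\C)$; since every orbit of such an involution has exactly two elements, $|\Irr_\alpha(\C)|$ will be even.

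By hypothesis $\mathcal{Z}_2(\C)$ contains a full symmetric subcategory braided equivalent to $\svect$; let $\delta$ be its non-trivial (invertible) object, so that $\delta\otimes\delta\cong\1$. Because $\delta\in\mathcal{Z}_2(\C)$, the double braiding $c_{X,\delta}\circ c_{\delta,X}$ is the identity of $\delta\otimes X$ for every $X\in\C$. Moreover the twist satisfies $\theta_\delta=-\id_\delta$: this is precisely the feature distinguishing $\svect$ from the Tannakian category $\Rep(\Zz_2)$ (for which $\theta_\delta=\id_\delta$), and it is the input that makes the whole argument work. Since $\delta$ is invertible, the functor $-\otimes\delta$ is an autoequivalence of $\C$ preserving Frobenius--Perron dimension and simplicity, with $(-\otimes\delta)^{2}\cong\id_\C$; hence it restricts to an involution $\sigma_\alpha$ of $\Irr_\alpha(\C)$ for each $\alpha$.

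It remains to prove that $\sigma_\alpha$ has no fixed point, i.e.\ that $\delta\otimes X\not\cong X$ for every simple object $X$. Assume $\delta\otimes X\cong X$ with $X$ simple. By the balancing axiom together with the centrality of $\delta$,
\[
\theta_{\delta\otimes X}=(\theta_\delta\otimes\theta_X)\circ c_{X,\delta}\circ c_{\delta,X}=\theta_\delta\otimes\theta_X .
\]
Writing $\theta_X=t_X\,\id_X$ with $t_X\in\mathbb{C}^{\times}$ (legitimate because $X$ is simple) and using $\theta_\delta=-\id_\delta$, the right-hand side equals $-t_X\,\id_{\delta\otimes X}$. On the other hand, naturality of the twist along any isomorphism $\delta\otimes X\xrightarrow{\ \sim\ }X$ forces $\theta_{\delta\otimes X}=t_X\,\id_{\delta\otimes X}$. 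Comparing, $t_X=-t_X$, contradicting $t_X\neq0$. So $\sigma_\alpha$ is fixed-point-free and $|\Irr_\alpha(\C)|$ is even.

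The main obstacle --- and the place where the hypothesis is genuinely used --- is the identification $\theta_\delta=-\id_\delta$: one must invoke that the symmetric subcategory is $\svect$, and not merely that there is some order-two invertible object in the Müger centre. Without this the statement fails; for instance $\Rep(Q_8)$ is symmetric, hence equal to its own Müger centre, and contains an order-two invertible object $g$ with $g\otimes V\cong V$ for the two-dimensional irreducible $V$, and there $\theta_g=\id_g$ so the computation above yields no contradiction. The argument also uses the twist, hence a ribbon structure on $\C$, which is unproblematic in the setting where the lemma is applied. Apart from these points the proof is routine bookkeeping with the braiding.
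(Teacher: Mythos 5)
Your proof is correct and follows the same strategy as the paper: the map $X\mapsto\delta\otimes X$ is a fixed-point-free involution of $\Irr_\alpha(\C)$, hence partitions it into pairs. The only difference is in how the fixed-point-freeness is justified: the paper simply cites M\"uger's Lemma 5.4 from the Galois-theory paper for the fact that $\delta\otimes X\not\cong X$, whereas you prove it directly from the balancing axiom, $\theta_{\delta\otimes X}=\theta_\delta\otimes\theta_X=-t_X\,\id$ versus $t_X\,\id$ by naturality. Your argument is the standard one behind the cited lemma, and your $\Rep(Q_8)$ example correctly isolates why $\theta_\delta=-\id_\delta$ (rather than the mere existence of an order-two central invertible) is the essential input. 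The one caveat you already flag deserves emphasis: the lemma is stated for an arbitrary braided fusion category, which need not come equipped with a ribbon structure, so in full generality one should either replace the twist by the canonical Drinfeld morphism $u_X\colon X\to X^{**}$ (which satisfies $u_{\delta\otimes X}=u_\delta\otimes u_X$ for $\delta$ in the M\"uger center and leads to the same sign contradiction without choosing a spherical structure) or fall back on the citation as the paper does; with that adjustment your proof is complete and self-contained, which is arguably an improvement over the paper's reliance on an external reference.
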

\begin{proof}
Let $\delta$ be the invertible object generating $\svect$, and let $X$ be an element in $\Irr_{\alpha}(\C)$. Then $\delta\otimes X$ is also an element in $\Irr_{\alpha}(\C)$. By \cite[Lemma 5.4]{muger2000galois}, $\delta\otimes X$ is not isomorphic to $X$. This implies that $\Irr_{\alpha}(\C)$ admits a partition $\{X_1,\cdots,X_n\}\cup \{\delta\otimes X_1,\cdots,\delta\otimes X_n\}$. Hence the cardinal number of $\Irr_{\alpha}(\C)$ is even.
\end{proof}

\subsection{Exact factorizations of fusion categories}
Let $\C$ be a fusion category, and let $\A, \B$ be fusion subcategories of $\C$. Let $\A\B$ be the full abelian (not necessarily tensor) subcategory of $\C$ spanned by direct summands in $X\otimes Y$, where $X\in \A$ and $Y\in \B$. We say that $\C$ factorizes into a product of $\A$ and $\B$ if $\C=\A\B$. A factorization $\C=\A\B$ of $\C$ is called exact if $A\cap \B=\vect$, and is denoted by $\C=\A\bullet\B$, see \cite{gelaki2017exact}.

By \cite[Theorem 3.8]{gelaki2017exact}, $\C=\A\bullet\B$ is an exact factorization if and only every simple object of $\C$ can be uniquely expressed in the form $X\otimes Y$, where $X\in \Irr(\A)$ and $\Irr(\B)$.

\section{Structure of a generalized near-group fusion category}\label{sec3}
In the rest of this paper, we assume that the fusion categories involved is not pointed, since pointed fusion categories have been classified, see e. g. \cite{Ostrik2003}.

\medbreak
Let $\C$ be a fusion category. Recall from Section \ref{sec2.1} that $G:=G(\C)$ acts on $\Irr(\C)$ by left tensor product.

\begin{definition}
A generalized near-group fusion category is a fusion category $\C$ such that $G$ transitively acts on the set $\Irr(\C)/G$.
\end{definition}

Let $\C$ be a generalized near-group fusion category and let $\Irr(\C)/G=\{X_1,\cdots,X_n\}$ be a full list of non-isomorphic non-invertible simple objects of $\C$. By equation \ref{decom1}, we may assume
\begin{equation}\label{decom2}
\begin{split}
X_1\otimes X_1^*=\bigoplus_{h\in \Gamma}h\oplus k_1X_{1}\oplus \cdots\oplus k_nX_{n},
\end{split}
\end{equation}
where $\Gamma=G[X_1]$ is the stabilizer of $X_1$ under the action of $G$, $k_1,\cdots, k_n$ are non-negative integers.

\begin{lemma}\label{fusionrules}
Let $\C$ be a generalized near-group fusion category. Then the fusion rules of $\C$ are determined by:

(1)\, For any $1\leq i\leq n$, we have
\begin{equation}
\begin{split}
X_i\otimes X_i^*=X_1\otimes X_1^*.
\end{split}\nonumber
\end{equation}

(2)\, For any $1\leq i,j\leq n$, there exists $g\in G$ such that
\begin{equation}
\begin{split}
X_i\otimes X_j=\bigoplus_{h\in \Gamma}gh\oplus k_1g\otimes X_{1}\oplus \cdots\oplus k_n g\otimes X_{n}.
\end{split}\nonumber
\end{equation}
\end{lemma}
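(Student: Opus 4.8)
The plan is to exploit the transitivity of the $G$-action on $\Irr(\C)/G$ together with the associativity of the tensor product, reducing everything to the single fusion rule \eqref{decom2} for $X_1\otimes X_1^*$.

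For part (1), fix $i$ and use transitivity: there is an element $g\in G$ with $X_i\cong g\otimes X_1$. Then $X_i^*\cong X_1^*\otimes g^{-1}$, so $X_i\otimes X_i^*\cong g\otimes(X_1\otimes X_1^*)\otimes g^{-1}$. Now observe that conjugation by an invertible object permutes $\Irr(\C)$ and fixes the set of invertibles; moreover it fixes each term of \eqref{decom2} setwise in the sense that we must show $g\otimes(X_1\otimes X_1^*)\otimes g^{-1}\cong X_1\otimes X_1^*$. To see this, note that $X_i\otimes X_i^*$ always contains $\1$ with multiplicity one and is self-dual, and its non-invertible part is a sub-object of $\bigoplus_j k_j'X_j$ for some multiplicities; the point is to argue that the multiplicities $k_j$ and the stabilizer subgroup are invariants independent of the choice of representative. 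Concretely, $\dim\Hom(X_j, X_i\otimes X_i^*)=\dim\Hom(X_i,X_i\otimes X_j)=\dim\Hom(g\otimes X_1, g\otimes X_1\otimes X_j)=\dim\Hom(X_1,X_1\otimes X_j)=\dim\Hom(X_j,X_1\otimes X_1^*)$, using that tensoring by the invertible $g$ is an equivalence; and similarly $\dim\Hom(h,X_i\otimes X_i^*)=\dim\Hom(\1, X_i\otimes X_i^*\otimes h^{-1})$ equals $1$ iff $h\in g\Gamma g^{-1}$, but since $X_i\otimes X_i^*$ contains the same number $|\Gamma|$ of invertibles (again by the equivalence) and this must be a subgroup of $G$ containing the stabilizer, one gets $g\Gamma g^{-1}=\Gamma$, i.e. $\Gamma$ is normal. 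This gives $X_i\otimes X_i^*=X_1\otimes X_1^*$.

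For part (2), again write $X_i\cong g_i\otimes X_1$ and $X_j\cong g_j\otimes X_1$ via transitivity. Then
\begin{equation}
X_i\otimes X_j\cong g_i\otimes X_1\otimes g_j\otimes X_1\cong (g_i g_j')\otimes(X_1\otimes X_1),\nonumber
\end{equation}
where $g_j'$ is the invertible object with $X_1\otimes g_j\cong g_j'\otimes X_1$ (such $g_j'$ exists because left tensoring by $X_1$ followed by the permutation action identifies an appropriate conjugate; more carefully one uses $X_1\otimes g_j\otimes X_1\cong X_1\otimes X_1\otimes g_j''$ for a suitable invertible $g_j''$, or simply absorbs the invertible on the appropriate side). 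So it suffices to decompose $X_1\otimes X_1$. Writing $X_1\otimes X_1\cong X_1\otimes (X_1^*)^* $ and comparing with \eqref{decom2} via duality: $\dim\Hom(h, X_1\otimes X_1)=\dim\Hom(h\otimes X_1^*, X_1)=\dim\Hom(X_1^*, h^{-1}\otimes X_1)$, and one checks this is nonzero exactly when $h^{-1}\otimes X_1\cong (X_1^*)^{**}\cong X_1$ up to the stabilizer, giving a coset $g\Gamma$; similarly the non-invertible multiplicities in $X_1\otimes X_1$ match those of $X_1\otimes X_1^*$ after an overall twist by some fixed invertible $g$. Collecting terms yields the stated formula $X_i\otimes X_j=\bigoplus_{h\in\Gamma}gh\oplus k_1 g\otimes X_1\oplus\cdots\oplus k_n g\otimes X_n$.

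The main obstacle is part (1): proving that the decomposition of $X\otimes X^*$ is genuinely independent of $X$, which amounts to showing both that the multiplicity vector $(k_1,\dots,k_n)$ is constant across the orbit and, crucially, that the stabilizer $\Gamma=G[X_1]$ is a normal subgroup of $G$ so that $g\Gamma g^{-1}=\Gamma$ for all $g$. The normality should follow from the transitivity hypothesis combined with the fact that all non-invertible simples lie in a single $G$-orbit of equal Frobenius-Perron dimension (by the $\varphi$-pseudounitarity / dimension count), forcing all stabilizers $G[X_i]$ to be conjugate and, once one knows $X_i\otimes X_i^*$ has the same invertible part, literally equal. I would isolate this as the first step; once it is in hand, everything else is a bookkeeping exercise with the equivalences $g\otimes(-)$ and duality, as sketched above.
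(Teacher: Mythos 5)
Your reduction of everything to the single decomposition of $X_1\otimes X_1^*$ is the right instinct, but your proof of part (1) has a genuine circularity. Writing $X_i\cong g\otimes X_1$ forces you to compare $g\otimes(X_1\otimes X_1^*)\otimes g^{-1}$ with $X_1\otimes X_1^*$, and the invertible part of the former is $gG[X_1]g^{-1}=g\Gamma g^{-1}$, which is exactly the stabilizer of $X_i$. To conclude you need $g\Gamma g^{-1}=\Gamma$, i.e.\ that $\Gamma$ is normal in $G$, equivalently that all the stabilizers $G[X_i]$ coincide. Your justification --- that the invertibles occurring in $X_i\otimes X_i^*$ form a subgroup of cardinality $|\Gamma|$ ``containing the stabilizer,'' hence equal to $\Gamma$ --- does not work: that subgroup \emph{is} the stabilizer $g\Gamma g^{-1}$ of $X_i$, and a conjugate of $\Gamma$ of the same order need not equal $\Gamma$. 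Your closing remark concedes the point: you say the stabilizers become ``literally equal'' \emph{once one knows} that $X_i\otimes X_i^*$ has the same invertible part as $X_1\otimes X_1^*$ --- but that is precisely part (1), the statement being proved. In the paper, normality of $\Gamma$ is Proposition \ref{normalsubgroup}(1) and is deduced \emph{from} Lemma \ref{fusionrules}, not the other way around; you have not supplied an independent proof, and I do not see one along the lines you sketch.

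The paper dissolves the difficulty by putting the invertible object on the other side of $X_1$: transitivity applied to the non-invertible simple $X_i^*$ gives $X_i^*\cong g_i\otimes X_1^*$, hence $X_i\cong X_i^{**}\cong(g_i\otimes X_1^*)^*\cong X_1\otimes g_i^*$, and then
\[
X_i\otimes X_i^*\cong X_1\otimes g_i^*\otimes g_i\otimes X_1^*\cong X_1\otimes X_1^*,
\]
so the invertible cancels in the middle instead of conjugating, and no normality statement is needed. The same device streamlines part (2): choosing $g$ with $X_i\cong g\otimes X_j^*$ gives $X_i\otimes X_j\cong g\otimes(X_j^*\otimes X_j)$ directly, and $X_j^*\otimes X_j=X_j^*\otimes(X_j^*)^*=X_1\otimes X_1^*$ by part (1) applied to the simple object $X_j^*$. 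Your version of (2), which routes through $X_1\otimes X_1$ and an only partially justified commutation $X_1\otimes g_j\cong g_j'\otimes X_1$, can be repaired, but it inherits the same bookkeeping issues and is unnecessary. I would rewrite part (1) with the dual-side cancellation and then let normality of $\Gamma$ fall out afterwards, as in Proposition \ref{normalsubgroup}.
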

\begin{proof}
(1)\, Since $G$ transitively acts on $\Irr(\C)/G(\C)$, there exists $g_i\in G$ such that $X_i^*=g_i\otimes X_1^*$ for any $i$. Then
\begin{equation}
\begin{split}
X_i\otimes X_i^*&\cong X_i^{**}\otimes X_i^*\cong(g_i\otimes X_1^*)^*\otimes (g_i\otimes X_1^*)\\
&\cong X_1\otimes g_i^*\otimes g_i\otimes X_1^*\cong X_1\otimes X_1^*.
\end{split}\nonumber
\end{equation}
(2)\, For any $i,j$, there exists $g\in G$ such that $X_i\cong g\otimes X_j^*$. Then
\begin{equation}
\begin{split}
X_i\otimes X_j&\cong g\otimes X_j^*\otimes X_j\cong g\otimes(\bigoplus_{h\in \Gamma}h\oplus k_1X_{1}\oplus \cdots\oplus k_nX_{n})\\
&\cong\bigoplus_{h\in \Gamma}gh\oplus k_1g\otimes X_{1}\oplus \cdots\oplus k_s g\otimes X_{n}.
\end{split}\nonumber
\end{equation}
\end{proof}

Let $G,\Gamma$ and $k_1,\cdots,k_n$ be the data associated to $\C$ as in Lemma \ref{fusionrules}. We shall say $\C$ is a generalized near-group fusion category of type $(G,\Gamma,k_1,\cdots,k_n)$.

\begin{proposition}\label{normalsubgroup}
Let $\C$ be a generalized near-group fusion category of type $(G,\Gamma,k_1,\cdots,k_n)$. Then

(1)\, $\Gamma$ is a normal subgroup of $G$.

(2)\, $\Irr(\C)=G\cup \{X_s|s\in G/\Gamma\}$, where $X_{\overline{g}}=g\otimes X_1$, $g\in G$.

(3)\, The rank of $\C$ is $[G:\Gamma](1+|\Gamma|)$ and $\FPdim(\C)=[G:\Gamma](\FPdim(X)^2+|\Gamma|)$.
\end{proposition}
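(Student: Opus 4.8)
The plan is to first identify the stabilizer of every non-invertible simple object, then deduce normality, and finally read off the rank and Frobenius--Perron dimension by a counting argument.

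\medbreak
\textbf{Step 1 (stabilizers of all $X_i$ coincide).} For any simple object $Y$ and any invertible $g\in G$, rigidity gives $\dim\Hom(g,Y\otimes Y^*)=\dim\Hom(g\otimes Y,Y)$, which is $1$ if $g\otimes Y\cong Y$ and $0$ otherwise, since $g\otimes Y$ and $Y$ are both simple. Hence the invertible part of $Y\otimes Y^*$ is precisely $\bigoplus_{g\in G[Y]}g$, each summand occurring with multiplicity one. Applying this with $Y=X_i$ and invoking Lemma \ref{fusionrules}(1), which says $X_i\otimes X_i^*=X_1\otimes X_1^*$ and whose invertible part is $\bigoplus_{h\in\Gamma}h$ by \eqref{decom2}, we get $G[X_i]=\Gamma$ for every $i$.

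\medbreak
\textbf{Step 2 (part (1)).} For $g\in G$ one computes directly that $G[g\otimes X_1]=g\Gamma g^{-1}$: indeed $h\otimes(g\otimes X_1)\cong g\otimes X_1$ if and only if $(g^{-1}hg)\otimes X_1\cong X_1$, i.e. $g^{-1}hg\in\Gamma$. Since $G$ acts transitively on $\{X_1,\dots,X_n\}$, the object $g\otimes X_1$ is isomorphic to some $X_i$, so Step 1 yields $g\Gamma g^{-1}=G[g\otimes X_1]=G[X_i]=\Gamma$. Therefore $\Gamma$ is normal in $G$.

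\medbreak
\textbf{Step 3 (part (2)).} The invertible simple objects of $\C$ are exactly the elements of $G$, while $\{X_1,\dots,X_n\}$ is a full list of the non-invertible ones, and by transitivity this set is the $G$-orbit of $X_1$. Since $G[X_1]=\Gamma$, the orbit--stabilizer theorem gives a bijection $G/\Gamma\to\{X_1,\dots,X_n\}$, $\overline{g}\mapsto X_{\overline{g}}:=g\otimes X_1$ (well defined because $g^{-1}g'\in\Gamma$ forces $g'\otimes X_1\cong g\otimes X_1$). Hence $\Irr(\C)=G\cup\{X_s\mid s\in G/\Gamma\}$ as a disjoint union, and $n=[G:\Gamma]$.

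\medbreak
\textbf{Step 4 (part (3)).} The rank is $|G|+n=|G|+[G:\Gamma]=[G:\Gamma]\,|\Gamma|+[G:\Gamma]=[G:\Gamma](1+|\Gamma|)$. For the Frobenius--Perron dimension, each $X_i=g_i\otimes X_1$ has $\FPdim(X_i)=\FPdim(X_1)=:\FPdim(X)$ because tensoring by an invertible object preserves $\FPdim$; summing squares over $\Irr(\C)$ gives $\FPdim(\C)=|G|+[G:\Gamma]\FPdim(X)^2=[G:\Gamma](\FPdim(X)^2+|\Gamma|)$.

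\medbreak
There is no serious obstacle here; the one point that genuinely needs care is Step 1, namely identifying the invertible part of $Y\otimes Y^*$ with the stabilizer of $Y$ and then using Lemma \ref{fusionrules}(1) to transport the equality $G[X_1]=\Gamma$ to all the $X_i$. Everything afterwards is bookkeeping with the orbit--stabilizer theorem and the multiplicativity of $\FPdim$.
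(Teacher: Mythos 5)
Your proposal is correct and follows essentially the same route as the paper: both identify $G[g\otimes X_1]=G[X_1]=\Gamma$ via Lemma \ref{fusionrules}(1), combine this with $G[g\otimes X_1]=gG[X_1]g^{-1}$ to get normality, and then use the orbit--stabilizer correspondence $\overline{g}\mapsto g\otimes X_1$ for parts (2) and (3). The only difference is that you spell out the Frobenius-reciprocity argument identifying the invertible part of $Y\otimes Y^*$ with the stabilizer, and the final counting, both of which the paper leaves implicit.
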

\begin{proof}
(1)\, By Lemma \ref{fusionrules}, $G[g\otimes X_1]=G[X_1]=\Gamma$ for any $g\in G$. On the other hand, $G[g\otimes X_1]=gG[X_1]g^{-1}=g\Gamma g^{-1}$. Hence $\Gamma$ is normal in $G$.

(2)\, Let $X_{\overline{g}}=g\otimes X_1$ for every $\overline{g}\in G/\Gamma$. Since $\Gamma=G[X_1]$, we have $g\otimes X_1\cong h\otimes X_1$ if and only if $h^{-1}g\otimes X_1\cong X_1$ if and only if $h^{-1}g\in \Gamma$ if and only if $\overline{g}=\overline{h}$ in $G/\Gamma$. Hence the isomorphic class of $X_{\overline{g}}$ is well defined.

(3)\, Part (3) follows from Part (2).
\end{proof}

\begin{remark}\label{T_Y}
Let $\C$ be a generalized near-group fusion category of type $(G,\Gamma,k_1,\cdots,k_n)$.

(1)\, If $(k_1,\cdots,k_n)=(0,\cdots,0)$ then $X_i\otimes X_j$ is a direct sum of invertible simple objects by Lemma \ref{fusionrules}. Then $\C$ is a generalized Tambara-Yamagami fusion category introduced in \cite{liptrap2010generalized}. In fact, it is easily observed that $\C$ is a generalized Tambara-Yamagami fusion category if and only if $(k_1,\cdots,k_n)=(0,\cdots,0)$.

(2)\, If $\C$ exactly has one non-invertible simple object, then $G=\Gamma$ and $\C$ is a near-group fusion category introduced in \cite{siehler2003near}.
\end{remark}

\begin{proposition}\label{subcategory}
Let $\C$ be a generalized near-group fusion category of type $(G,\Gamma,k_1,\cdots,k_n)$. Assume that $\D$ is  a non-pointed fusion subcategory of $\C$. Then $\D$ is also a generalized near-group fusion category.
\end{proposition}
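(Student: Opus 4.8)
The plan is to check directly that the group $G(\D)$ generated by the invertible simple objects of $\D$ acts transitively on the non-invertible simple objects of $\D$; together with the standing assumption that $\D$ is non-pointed, this is precisely the definition of a generalized near-group fusion category.

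The one preliminary observation I would record is that invertibility is an intrinsic property of a simple object: for $X\in\D$ the condition $X\otimes X^*\cong\1$ holds in $\D$ if and only if it holds in $\C$ (equivalently, $\FPdim$ of an object of $\D$ agrees with its $\FPdim$ computed in $\C$), so the non-invertible simples of $\D$ are exactly those simples of $\D$ that are non-invertible in $\C$, and $\Irr(\D_{pt})=\Irr(\D)\cap\Irr(\C_{pt})$. Then I would fix two non-invertible simple objects $Y,Z\in\Irr(\D)$. Viewed in $\C$ they are non-invertible, so by the hypothesis on $\C$ there is $g\in G(\C)$ with $Z\cong g\otimes Y$. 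The key point is that such a $g$ can be recovered inside $\D$: from $\Hom_\C(g,Z\otimes Y^*)\cong\Hom_\C(g\otimes Y,Z)\cong\Hom_\C(Z,Z)\neq 0$ one sees that $g$ is a direct summand of $Z\otimes Y^*$, and since $\D$ is a fusion subcategory it contains $Y^*$, hence $Z\otimes Y^*$, hence every direct summand of it; thus $g\in\Irr(\D)$, and being invertible, $g\in\Irr(\D_{pt})\subseteq G(\D)$. Hence $Y$ and $Z$ lie in the same $G(\D)$-orbit.

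Since $Y$ and $Z$ were arbitrary, $G(\D)$ acts transitively on the non-invertible simple objects of $\D$, so $\D$ is a generalized near-group fusion category. I do not expect a real obstacle here: the proof reduces to the single observation that the invertible object witnessing transitivity in $\C$ occurs as a constituent of $Z\otimes Y^*$ and therefore automatically belongs to the subcategory $\D$. The only point requiring (routine) care is the fact that invertibility — and more generally $\FPdim$ — is unchanged under passage to a fusion subcategory, which is what licenses applying the generalized near-group hypothesis on $\C$ to simple objects of $\D$.
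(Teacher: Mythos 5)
Your proof is correct and follows essentially the same route as the paper: both arguments take the invertible object $g$ witnessing transitivity in $\C$ and observe that it occurs as a direct summand of $Z\otimes Y^*$, which lies in the fusion subcategory $\D$, so that $g\in G(\D)$. Your added remark that invertibility and $\FPdim$ are intrinsic to the subcategory is a routine detail the paper leaves implicit.
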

\begin{proof}
We shall prove that $G(\D)$ transitively acts on $\Irr(\D)/G(\D)$. Let $X_i$ and $X_j$ be non-invertible simple objects in $\D$. Then there exists $g\in G$ such that $X_j=g\otimes X_i$. From $\dim\Hom(X_j,g\otimes X_i)=\dim\Hom(g,X_j\otimes X_i^*)=1$, we know that $g$ is a summand of $X_j\otimes X_i^*$. On the other hand, $X_j\otimes X_i^*$ lies in $\D$ since $\D$ is a fusion subcategory of $\C$. Hence $g$ is an element of $G(\D)$. This proves that $G(\D)$ transitively acts on $\Irr(\D)/G(\D)$
\end{proof}

\begin{theorem}\label{categorytype}
Let $\C$ be a generalized near-group fusion category of type $(G,\Gamma,k_1,\cdots,k_n)$. Assume that $(k_1,\cdots,k_n)\neq (0,\cdots,0)$. Then

(1)\, The adjoint subcategory $\C_{ad}$ is non-pointed. There is a 1-1 correspondence between the non-pointed fusion subcategories of $\C$ and the subgroups of the universal grading group $\mathcal{U}(\C)$.

(2)\, For any $g\in \mathcal{U}(\C)$, the component $\C_g$ contains at least one invertible simple object. In particular, $\Irr(\C_g)=\{\alpha_g, \alpha_g\otimes Y_1, \cdots, \alpha_g\otimes Y_s \}$, where $\alpha_g$ is an invertible simple object in $\C_g$ and $\Irr(\C_{ad})=\{\1,Y_{1},\cdots,Y_{s}\}$.
\end{theorem}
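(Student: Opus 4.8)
The plan is to exploit the fusion rules from Lemma~\ref{fusionrules} together with the explicit decomposition \eqref{decom2} of $X_1 \otimes X_1^*$. First I would establish part (1). Since $(k_1,\dots,k_n)\neq(0,\dots,0)$, say $k_1>0$, the decomposition \eqref{decom2} shows that $X_1$ itself appears as a summand of $X_1\otimes X_1^*$, hence $X_1\in\C_{ad}$; this immediately shows $\C_{ad}$ is non-pointed. For the $1$--$1$ correspondence, recall that fusion subcategories containing $\C_{ad}$ correspond bijectively to subgroups of $\mathcal{U}(\C)$ (this is the standard consequence of the universal grading). So it suffices to show that \emph{every} non-pointed fusion subcategory $\D$ contains $\C_{ad}$. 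By Proposition~\ref{subcategory}, such a $\D$ is itself a generalized near-group fusion category; it contains some non-invertible simple object $X_i$, and then $X_i\otimes X_i^*=X_1\otimes X_1^*$ lies in $\D$ by Lemma~\ref{fusionrules}(1). Since $X_1\otimes X_1^*$ contains $X_1$ (as $k_1>0$) and every invertible object in $\Gamma$, and since $\C_{ad}$ is generated by the objects appearing in $X_i\otimes X_i^*$, I would argue that the simple objects $\1, Y_1,\dots,Y_s$ of $\C_{ad}$ are all obtained from tensor powers of $X_1\otimes X_1^*$ and hence lie in $\D$. This needs the observation that $\C_{ad}$ is in fact generated as a fusion category by $X_1\otimes X_1^*$ alone — which follows because all the $X_i\otimes X_j^*$ differ from $X_1\otimes X_1^*$ only by tensoring with invertibles already accounted for, and the invertibles lying in $\C_{ad}$ are generated by $\Gamma$.

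Next, part (2). The key structural input is that $\mathcal{U}(\C)$ acts on itself and the grading is faithful, so each $\C_g\neq 0$ and by \eqref{FPdimgrading} all $\C_g$ have equal Frobenius--Perron dimension $\FPdim(\C_{ad})$. I want to show $\C_g$ contains an invertible object. The natural approach: the invertible objects of $\C$ form the group $G=G(\C)$, which maps onto $\mathcal{U}(\C)$ (every invertible object lands in some component, and this gives a surjective group homomorphism $G\to\mathcal{U}(\C)$ because $\C_{pt}$ surjects onto $\mathcal{U}(\C_{pt})$ and the image of $\C_{pt}$ hits every component — indeed by Proposition~\ref{normalsubgroup}(2), $\Irr(\C)=G\cup\{X_{\bar g}\}$, so every component is spanned by some invertibles together with some $X_{\bar g}$'s). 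Concretely: pick any invertible $\alpha_g\in G$ lying in $\C_g$ (it exists since $G\to\mathcal{U}(\C)$ is onto). Then $\alpha_g\otimes\C_{ad}\subseteq\C_g$, giving the simple objects $\alpha_g, \alpha_g\otimes Y_1,\dots,\alpha_g\otimes Y_s$. To see these are \emph{all} of $\Irr(\C_g)$, compare dimensions: $\FPdim(\C_g)=\FPdim(\C_{ad})=\sum_{i=0}^{s}\FPdim(Y_i)^2=\sum_{i=0}^s\FPdim(\alpha_g\otimes Y_i)^2$, and the listed objects are pairwise non-isomorphic (since $Y_i\not\cong Y_j$ and tensoring by the invertible $\alpha_g$ is a bijection on $\Irr$), so they exhaust $\C_g$.

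The main obstacle I anticipate is part (2)'s claim that every graded component contains an invertible object — equivalently, that $G\to\mathcal{U}(\C)$ is surjective. This is not automatic for a general fusion category, so it must genuinely use the generalized near-group hypothesis. The cleanest route is via Proposition~\ref{normalsubgroup}(2): the full list of simple objects is $G\sqcup\{X_{\bar g}: \bar g\in G/\Gamma\}$, and each $X_{\bar g}=g\otimes X_1$ lies in the same component as $g\otimes(\text{component of }X_1)$; writing $\C_{g_0}$ for the component containing $X_1$, every component of the form $\bar g\cdot g_0$ (with $\bar g$ in the image of $G$) contains an $X_{\bar g}$, while the component containing the invertible $g$ is the image of $g$. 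Since these components together must cover all of $\mathcal{U}(\C)$ and the grading is faithful, and since the $X_{\bar g}$'s live in components that are translates-by-$G$-images of $\C_{g_0}$, a counting/covering argument forces $G\to\mathcal{U}(\C)$ to be onto. I would also double-check the degenerate-looking edge case where some $\C_g$ might a priori contain only non-invertible objects; the dimension equality $\FPdim(\C_g)=\FPdim(\C_{ad})$ combined with the fact that all non-invertible simples have the common dimension $\FPdim(X_1)$ and come in $G$-orbits of a fixed size should rule this out.
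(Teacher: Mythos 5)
Your part (1) and the exhaustion step at the end of part (2) (listing $\alpha_g,\alpha_g\otimes Y_1,\dots,\alpha_g\otimes Y_s$ and comparing $\FPdim(\C_g)=\FPdim(\C_{ad})$) follow essentially the same lines as the paper. The problem is the crucial first claim of part (2), that every component $\C_g$ contains an invertible simple object. You reduce this to surjectivity of the degree homomorphism $\pi\colon G\to\mathcal{U}(\C)$ and propose a ``counting/covering argument''. As you have set it up, the covering only gives $\mathcal{U}(\C)=\pi(G)\cup\pi(G)g_0$, where $g_0$ is the degree of $X_1$: every simple object is either invertible (degree in $\pi(G)$) or some $g\otimes X_1$ (degree in $\pi(G)g_0$). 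This yields $[\mathcal{U}(\C):\pi(G)]\le 2$, not surjectivity; a priori $\pi(G)g_0$ could be a nontrivial coset all of whose components consist entirely of non-invertible simples. Your suggested patch via the equality $\FPdim(\C_g)=\FPdim(\C_{ad})$ and the common dimension of the non-invertibles is not carried out and does not obviously close (one would additionally need $\FPdim(X_1)^2>|\Gamma|$, which is exactly where the hypothesis $(k_1,\dots,k_n)\neq(0,\dots,0)$ would have to enter).

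The missing observation, which is also how the paper argues, is that the hypothesis $(k_1,\dots,k_n)\neq(0,\dots,0)$ already places a non-invertible simple object $Y$ in $\C_{ad}$ (your part (1)). Then for any non-invertible simple $X\in\C_g$, the object $X\otimes Y$ lies in $\C_g\otimes\C_e\subseteq\C_g$ and, by Lemma \ref{fusionrules}(2), contains $|\Gamma|$ invertible summands; so $\C_g$ contains an invertible object directly, with no need for the covering argument. (Equivalently, in your language: some $X_i$ with $k_i\neq 0$ lies in $\C_{ad}$, so $e=\pi(g_i)g_0$ for some $g_i\in G$, hence $g_0\in\pi(G)$ and the two cosets coincide, making $\pi$ surjective.) With this one step supplied, the rest of your argument goes through and agrees with the paper's proof.
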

\begin{proof}
(1)\, Let $\D$ be a non-pointed fusion subcategory of $\C$. For every non-invertible simple object $X\in \D$, Lemma \ref{fusionrules} shows that
\begin{equation}
\begin{split}
X\otimes X^*=\bigoplus_{h\in \Gamma}h\oplus k_1X_{1}\oplus \cdots\oplus k_nX_{n}.
\end{split}\nonumber
\end{equation}
Hence the adjoint subcategory $\C_{ad}$ is generated by $\Gamma$ and $X_i$'s with $k_i\neq 0$. Since $(k_1,\cdots,k_n)\neq (0,\cdots,0)$, $\C_{ad}$ is not pointed. In particular, $\C_{ad}$ is a fusion subcategory of $\D$. This shows that every non-pointed fusion subcategory of $\C$ contains $\C_{ad}$. Therefore, part (1) follows from \cite[Corollary 2.5]{drinfeld2010braided}.

(2)\, We shall first show that every component $\C_g$ of the universal grading at least contains an invertible simple object. By part (1), $\C_{ad}$ contains a non-invertible simple object $Y$. Let $X$ be a simple object in $\C_g$. We may assume that $X$ is not invertible. Then $X\otimes Y\in \C_g\otimes \C_{ad}\subseteq \C_g$. By Lemma \ref{fusionrules}(2), $X\otimes Y$ contains $|\Gamma|$ invertible simple objects. Hence $\C_g$ contains at least one invertible simple object.

Let $\alpha_g\in\C_g$ be an invertible simple object, and $\1,Y_{1},\cdots,Y_{s}$ be all non-isomorphic simple objects in $\C_{ad}$. Then $\alpha_g,\alpha_g\otimes Y_{1},\cdots,\alpha_g\otimes Y_{s}$ are non-isomorphic simple objects in $\C_g$. Since
$$\FPdim(\alpha_g\otimes Y_{i})=\FPdim(Y_{i})\quad \mbox{and}\quad \FPdim(\C_g)=\FPdim(\C_{ad}),$$
we obtain that $\alpha_g, \alpha_g\otimes Y_1, \cdots, \alpha_g\otimes Y_s$ are all non-isomorphic simple objects in $\C_g$. This completes the proof.
\end{proof}

\begin{remark}\label{remark1}
Let $\C$ be a generalized near-group fusion category of type $(G,\Gamma,k_1,\cdots,k_n)$. Then Proposition \ref{categorytype} implies the following two facts:

(1)\, If $(k_1,\cdots,k_n)\neq (0,\cdots,0)$ then the adjoint subcategory $\C_{ad}$ is the smallest non-pointed fusion subcategory of $\C$. This is because that $\C_{ad}$ corresponds to the trivial subgroup of $\mathcal{U}(\C)$.

(2)\, Assume that $(k_1,\cdots,k_n)\neq (0,\cdots,0)$. Then $\C_{ad}$ is not pointed by Proposition \ref{categorytype}. Let $X\in \C_{ad}$ be a non-invertible simple object. Then Lemma \ref{fusionrules} shows the decomposition of $X\otimes X^*$ contains non-invertible simple objects. Hence $(\C_{ad})_{ad}$ is not pointed. But part (1) shows that $\C_{ad}$ is the smallest non-pointed fusion subcategory of $\C$. Hence $\C_{ad}=(\C_{ad})_{ad}$, and hence the universal grading group $\mathcal{U}(\C_{ad})$ of $\C_{ad}$ is trivial.
\end{remark}

\begin{corollary}\label{main01}
Let $\C$ be a generalized near-group fusion category of type $(G,\Gamma,k_1,\cdots,k_n)$. Assume that $(k_1,\cdots,k_n)\neq (0,\cdots,0)$ and the group $G(\C_{ad})$ is trivial. Then $\C=\C_{pt}\bullet\C_{ad}$ admits an exact factorization of $\C_{pt}$ and $\C_{ad}$.
\end{corollary}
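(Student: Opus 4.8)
The plan is to verify the two defining conditions of an exact factorization directly from the combinatorial description of $\Irr(\C)$ obtained in Proposition \ref{normalsubgroup} and Theorem \ref{categorytype}. First I would record what the hypotheses give us. Since $G(\C_{ad})$ is trivial, $\C_{ad}$ contains no nontrivial invertible objects, so by Theorem \ref{categorytype}(2) we have $\Irr(\C_{ad})=\{\1,Y_1,\dots,Y_s\}$ with all $Y_i$ non-invertible, and each component $\C_g$ of the universal grading satisfies $\Irr(\C_g)=\{\alpha_g,\alpha_g\otimes Y_1,\dots,\alpha_g\otimes Y_s\}$ for a chosen invertible $\alpha_g\in\C_g$. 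Also $\C_{pt}$ is the pointed subcategory; I would first check $\C_{pt}\cap\C_{ad}=\vect$, which is immediate because a simple object in the intersection is both invertible and lies in $\Irr(\C_{ad})=\{\1,Y_1,\dots,Y_s\}$, forcing it to be $\1$.

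Next I would show $\C=\C_{pt}\C_{ad}$, i.e. every simple object of $\C$ is a summand of $\alpha\otimes Z$ for some invertible $\alpha$ and some $Z\in\Irr(\C_{ad})$. Take any $W\in\Irr(\C)$; it lies in some component $\C_g$, so $W\cong\alpha_g\otimes Y_i$ for some $i$ (with $Y_0=\1$), and $\alpha_g\in\Irr(\C_{pt})$, $Y_i\in\Irr(\C_{ad})$; this exhibits $W$ as (in fact equal to) a product of the required form. Finally, for exactness I would invoke \cite[Theorem 3.8]{gelaki2017exact}: it remains to check the expression $W=\alpha\otimes Z$ with $\alpha\in\Irr(\C_{pt})$, $Z\in\Irr(\C_{ad})$ is unique. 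Suppose $\alpha\otimes Y_i\cong\beta\otimes Y_j$ with $\alpha,\beta$ invertible and $Y_i,Y_j\in\Irr(\C_{ad})$. Applying the universal grading, $\alpha$ and $\beta$ lie in the same component (since $Y_i,Y_j\in\C_{ad}=\C_e$), so $\beta^{-1}\alpha\in\C_{ad}$ is invertible, hence $\beta^{-1}\alpha\cong\1$ by triviality of $G(\C_{ad})$, giving $\alpha\cong\beta$ and then $Y_i\cong Y_j$. Thus the factorization is exact and $\C=\C_{pt}\bullet\C_{ad}$.

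The genuinely substantive input is Theorem \ref{categorytype}(2): the fact that each graded component consists \emph{exactly} of the translates $\alpha_g\otimes Y_i$ of the adjoint subcategory's simples by a single invertible object. Once that structural statement is in hand, the verification of the two exact-factorization axioms is essentially bookkeeping with the universal grading and the hypothesis $G(\C_{ad})=\{e\}$. The only point requiring a little care is the uniqueness argument, where one must remember to use the grading to pin down $\alpha$ up to $\C_{ad}$ before invoking triviality of $G(\C_{ad})$; without that step one cannot conclude $\alpha\cong\beta$. I do not anticipate any serious obstacle beyond assembling these observations in the right order.
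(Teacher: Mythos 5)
Your proposal is correct and follows essentially the same route as the paper: use Theorem \ref{categorytype}(2) together with the triviality of $G(\C_{ad})$ to see that each graded component is $\{\alpha_g,\alpha_g\otimes Y_1,\dots,\alpha_g\otimes Y_s\}$ with a unique invertible object, and then invoke \cite[Theorem 3.8]{gelaki2017exact}. Your write-up is in fact slightly more careful than the paper's, since you spell out the uniqueness of the expression $W\cong\alpha\otimes Z$ (via the grading and $G(\C_{ad})=\{e\}$), a step the paper leaves implicit.
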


\begin{proof}
 Since $G(\C_{ad})$ is trivial, Theorem \ref{categorytype}(2) shows that every component $\C_g$ exactly contains only one invertible simple object. Let $\delta_g$ be the invertible simple object in $\C_g$. Then $\{\delta_g|g\in U(\C)\}=G(\C)$, and hence every simple object of $\C$ can be expressed in the form $X\otimes Y$, where $X\in \C_{pt}$ and $Y\in \C_{ad}$ are simple objects, also by Theorem \ref{categorytype}(2). The result then follows from \cite[Theorem 3.8]{gelaki2017exact}.
\end{proof}

\section{Slightly degenerate generalized near-group fusion categories}\label{sec4}
Recall from \cite{ostrik2003fusion} that a Yang-Lee category is a rank $2$ modular category which admits the Yang-Lee fusion rules.

\begin{lemma}\label{dimuc}
Let $\C$ be a generalized near-group fusion category of type $(G,\Gamma,k_1,\cdots,k_n)$. Assume that $\FPdim(\C_{pt})=|\mathcal{U}(\C)|$ and $(k_1,\cdots,k_n)\neq(0,\cdots,0)$. Then $\C_{ad}$ is a Yang-Lee category.
\end{lemma}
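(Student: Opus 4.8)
The plan is to pin down $\C_{ad}$ exactly: a Frobenius--Perron dimension count forces it to have rank $2$, the classification of rank-$2$ fusion categories identifies its fusion rules as the Yang--Lee ones, and non-degeneracy together with the inherited spherical structure makes it modular.

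First I would show that the universal grading of $\C$ is as tight as possible. Since $(k_1,\dots,k_n)\neq(0,\dots,0)$, Theorem \ref{categorytype}(2) guarantees that every component $\C_g$ of the universal grading contains an invertible simple object, so restricting the universal grading to $\C_{pt}$ yields a faithful grading of $\C_{pt}$ by $\mathcal{U}(\C)$. Counting invertible simple objects, $|G(\C)|=|\Irr(\C_{pt})|=\sum_{g\in\mathcal{U}(\C)}|\Irr(\C_{pt})\cap\Irr(\C_g)|\geq|\mathcal{U}(\C)|$, and the hypothesis $\FPdim(\C_{pt})=|\mathcal{U}(\C)|$, which says $|G(\C)|=|\mathcal{U}(\C)|$, forces each component to contain exactly one invertible simple object. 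In particular the trivial component $\C_{ad}=\C_e$ has $\1$ as its only invertible simple object; since $\C_{ad}$ is generated by $\Gamma$ together with the $X_i$ for which $k_i\neq 0$ (Theorem \ref{categorytype}(1) and Lemma \ref{fusionrules}), this yields $\Gamma=\{e\}$ and $(\C_{ad})_{pt}=\vect$.

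Next I would count dimensions. All non-invertible simple objects of $\C$ have the same Frobenius--Perron dimension $d=\FPdim(X_1)>1$, since they form one $G(\C)$-orbit and tensoring with invertibles preserves Frobenius--Perron dimension. By Proposition \ref{normalsubgroup}(3) with $\Gamma=\{e\}$, $\FPdim(\C)=|G(\C)|(d^2+1)$, while $|\mathcal{U}(\C)|=\FPdim(\C_{pt})=|G(\C)|$, so $\FPdim(\C_{ad})=\FPdim(\C)/|\mathcal{U}(\C)|=d^2+1$. But $\Irr(\C_{ad})$ consists of $\1$ and some non-invertible simple objects, each of dimension $d$; hence there is exactly one of them, say $Y$, it is self-dual, and $Y\otimes Y=\1\oplus kY$ for an integer $k\geq 1$ (with $k\geq 1$ because $\FPdim(Y)=d>1$). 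By the classification of rank-$2$ fusion categories in \cite{ostrik2003fusion} the only non-pointed possibility is $k=1$, so $\C_{ad}$ has the Yang--Lee fusion rules and $d=\tfrac{1+\sqrt 5}{2}$.

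Finally I would check modularity. As $\C$ is slightly degenerate it is braided, so $\C_{ad}$ is braided; its M\"uger center $\mathcal{Z}_2(\C_{ad})$ is symmetric and therefore has only objects of integer Frobenius--Perron dimension, so $\FPdim(Y)\notin\Zz$ forces $\mathcal{Z}_2(\C_{ad})=\vect$ and $\C_{ad}$ is non-degenerate. Together with the spherical structure $\C_{ad}$ inherits from $\C$, which is pseudounitary by Thornton's theorem \cite{Thornton2012Generalized}, this shows $\C_{ad}$ is a rank-$2$ modular category with the Yang--Lee fusion rules, i.e.\ a Yang--Lee category. The one step that is not pure bookkeeping is the appeal to the rank-$2$ classification: the dimension count leaves open every fusion ring $Y\otimes Y=\1\oplus kY$, and ruling out $k\geq 2$ requires knowing such rings are not categorifiable.
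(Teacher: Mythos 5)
Your proof is correct and follows essentially the same route as the paper's: first show each component of the universal grading contains exactly one invertible simple object, deduce that $\C_{ad}$ has rank $2$, and conclude via Ostrik's classification of rank-$2$ fusion categories. The only differences are cosmetic --- you obtain rank $2$ from the Frobenius--Perron count $\FPdim(\C_{ad})=d^2+1$ where the paper instead compares the number of non-invertible simple objects ($[G:\Gamma]\leq |G|$) with the number of components, and your closing appeal to slight degeneracy for modularity invokes a hypothesis the lemma does not actually contain (and does not need, since the cited rank-$2$ classification already yields modularity).
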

\begin{proof}
By Proposition \ref{categorytype}, every component $\C_g$ of the universal grading of $\C$ at least has one invertible simple object. Hence, our assumption implies that every component $\C_g$ exactly contains one invertible simple object.

By Proposition \ref{normalsubgroup}, the number of non-isomorphic non-invertible simple objects is not more than the order of $G$. In addition, Theorem \ref{categorytype} shows that every component $\C_g$ admits the same type. Hence every component $\C_g$ only contains two simple objects: one is invertible and the other is not. In particular, $\C_{ad}$ is a Yang-Lee category by the classification of rank $2$ fusion categories \cite{ostrik2003fusion}.
\end{proof}

An Ising category $\I$ is a fusion category which is not pointed and has Frobenius-Perron dimension $4$. Recall from \cite{drinfeld2010braided} that any Ising category $\I$ is a non-degenerate braided fusion category and the adjoint subcategory $\I_{ad}=\I_{pt}$ is braided equivalent to $\svect$.

\begin{lemma}\label{slightly-deg1}
Let $\C$ be a braided generalized near-group fusion category of type $(G,\Gamma,k_1,\cdots,k_n)$. Assume that $(k_1,\cdots,k_n)=(0,\cdots,0)$ and $\C$ is slightly degenerate. Then $\C$ is exactly one of the following:

(1)\,  $\C\cong \I\boxtimes \B$, where $\I$ is an Ising category, $\B$ is a slightly degenerate pointed fusion category.

(2)\, $\C$ is generated by a $\sqrt{2}$-dimensional simple object. In this case, $\C$ is prime.
\end{lemma}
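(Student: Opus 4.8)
The plan is to extract the numerical invariants of $\C$, use Lemma \ref{Cpt_of_sligdegen} to pin down $\Gamma$ and identify $\C_{ad}$, and then split according to whether $\C$ contains an Ising fusion subcategory.

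Since $(k_1,\dots,k_n)=(0,\dots,0)$, $\C$ is a generalized Tambara--Yamagami category (Remark \ref{T_Y}(1)): by Lemma \ref{fusionrules} every non-invertible simple object $X_i$ satisfies $X_i\otimes X_i^*=\bigoplus_{h\in\Gamma}h$, so $\C_{ad}=\vect_\Gamma$ is pointed, $\FPdim(\C_{ad})=|\Gamma|$, $\FPdim(X_i)=\sqrt{|\Gamma|}$, and $\C$ is nilpotent (a $\mathcal{U}(\C)$-extension of a pointed category). Comparing $\FPdim(\C)=|\mathcal{U}(\C)|\FPdim(\C_{ad})$ with Proposition \ref{normalsubgroup}(3), which gives $\FPdim(\C)=2|G|=2[G:\Gamma]|\Gamma|$, one reads off $|\mathcal{U}(\C)|=2[G:\Gamma]$, while $\FPdim(\C_{pt})=|G|$. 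Feed these into Lemma \ref{Cpt_of_sligdegen}. Its case (2) forces $|G|=2|\mathcal{U}(\C)|=4[G:\Gamma]$, i.e.\ $|\Gamma|=4$, and $\mathcal{Z}_2(\C)\subseteq\mathcal{Z}_2(\C_{ad})\subseteq\C_{ad}=\vect_\Gamma$; then the object $\delta$ generating $\mathcal{Z}_2(\C)\simeq\svect$ lies in $\Gamma=G[X_1]$, so $\delta\otimes X_1\cong X_1$ --- impossible, since $\delta$ centralizes $X_1$ (it lies in the M\"uger center) and has twist $-1$ (\cite[Lemma 5.4]{muger2000galois}, exactly as in the proof of Lemma \ref{slight_degenerate}). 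Hence case (1) holds: $|G|=|\mathcal{U}(\C)|=2[G:\Gamma]$, $|\Gamma|=2$, $\C_{ad}\simeq\vect_{\Zz_2}$, and $\mathcal{Z}_2(\C)\nsubseteq\C_{ad}$.

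Write $\epsilon$ for the non-trivial invertible object of $\C_{ad}$, so $\Gamma=\langle\epsilon\rangle$. For any braided fusion category every invertible object has trivial monodromy with each summand of each $X\otimes X^*$ (such a summand has the same monodromy with an invertible object as $\1$ does); hence $\C_{pt}\subseteq(\C_{ad})'$, and together with $\C_{ad}\subseteq\C_{pt}$ this yields $\C_{ad}\subseteq(\C_{ad})'$, i.e.\ $\C_{ad}$ is symmetric, so $\C_{ad}\simeq\Rep(\Zz_2)$ or $\C_{ad}\simeq\svect$. The first is impossible: there $\epsilon$ is a boson that centralizes $\C_{ad}$ and $\C_{pt}$ (by the above) and every non-invertible simple $X$ (since $\epsilon\otimes X\cong X$ and $\theta_\epsilon=1$, by \cite[Lemma 5.4]{muger2000galois}), hence $\epsilon\in\mathcal{Z}_2(\C)$, contradicting that $\mathcal{Z}_2(\C)\simeq\svect$ has no non-trivial boson. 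So $\C_{ad}\simeq\svect$.

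Suppose now that $\C$ contains an Ising fusion subcategory $\I$. Then $\I$ is non-degenerate, so Theorem \ref{MugerThm} gives $\C\simeq\I\boxtimes\I'$. The adjoint subcategory of $\I\boxtimes\I'$ is $\I_{ad}\boxtimes(\I')_{ad}=\svect\boxtimes(\I')_{ad}$ and equals $\C_{ad}\simeq\svect$, which has dimension $2$; hence $(\I')_{ad}=\vect$, so $\I'$ is pointed, and $\mathcal{Z}_2(\C)=\mathcal{Z}_2(\I)\boxtimes\mathcal{Z}_2(\I')=\mathcal{Z}_2(\I')\simeq\svect$ makes $\B:=\I'$ slightly degenerate pointed; this is alternative (1). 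If instead $\C$ has no Ising fusion subcategory, I would prove $\C$ is prime and generated by a single $\sqrt2$-dimensional simple object, by induction on $\FPdim(\C)$. Since a $\sqrt2$-dimensional simple object generates a fusion subcategory of dimension $4$, such objects can live only in the $2$-primary factor of the DGNO decomposition of the nilpotent braided category $\C$, so after absorbing the (pointed) factors of odd prime-power dimension into a pointed tensor factor we may assume $\FPdim(\C)$ is a power of $2$. A proper non-trivial non-degenerate fusion subcategory $\mathcal{D}$ of $\C$ would give $\C\simeq\mathcal{D}\boxtimes\mathcal{D}'$; either $\mathcal{D}$ is not pointed --- then it is a modular generalized near-group category, and Thornton's classification \cite{Thornton2012Generalized} exhibits an Ising fusion subcategory inside it, hence inside $\C$, a contradiction --- or $\mathcal{D}$ is pointed, and then $\mathcal{D}'$ is a strictly smaller non-pointed slightly degenerate generalized Tambara--Yamagami category with $|\Gamma|=2$, to which the inductive hypothesis applies, again producing an Ising fusion subcategory of $\C$ unless $\mathcal{D}=\vect$; here the essential point is that a Deligne product of two Ising categories, having a simple object of dimension $2$, is never a generalized Tambara--Yamagami category. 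Thus $\C$ is prime, and using $X\otimes X=\alpha_X\oplus\alpha_X\epsilon$ from Lemma \ref{fusionrules}(2) one identifies $\langle X\rangle$ with the generalized Tambara--Yamagami subcategory whose invertibles are $\langle\epsilon,\alpha_X\rangle$, so that $\langle X\rangle=\C$ for a suitable non-invertible simple $X$. I expect this last step --- setting up the induction and controlling the pointed part in the prime case --- to be the main obstacle; the reduction to $|\Gamma|=2$, the identification of $\C_{ad}$, and the Ising splitting are comparatively routine.
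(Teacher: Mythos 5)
Your first half tracks the paper's proof exactly: you derive $|\mathcal{U}(\C)|=2[G:\Gamma]$ from Natale's result, feed it into Lemma \ref{Cpt_of_sligdegen}, and kill the case $\FPdim(\C_{pt})=2|\mathcal{U}(\C)|$ by observing that $|\Gamma|=4$ would put the fermion $\delta$ inside $\Gamma=G[X_1]$, whence $\delta\otimes X_1\cong X_1$, which is impossible. That part is correct (your extra identification $\C_{ad}\simeq\svect$ is a refinement the paper does not bother with, though your appeals to M\"uger's twist formula tacitly assume a ribbon structure). The divergence, and the problem, is the remaining case $|\Gamma|=2$: the paper simply notes that $\C$ is now a braided extension of a rank $2$ pointed category and quotes \cite[Theorem 5.11]{Dong2018extensions}, whereas you attempt to reprove that classification by induction, and your sketch does not close.

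Concretely, the inductive step fails where you need it most. If $\D\subseteq\C$ is a nontrivial non-degenerate \emph{pointed} subcategory, you write $\C\simeq\D\boxtimes\D'$ and apply the inductive hypothesis to $\D'$; but if $\D'$ lands in alternative (2) (prime, generated by a $\sqrt2$-dimensional object, no Ising subcategory), then no Ising subcategory of $\C$ is produced and no contradiction arises --- your parenthetical ``unless $\D=\vect$'' is exactly the assertion that needs proof. Indeed, for such a $\D'$ none of whose $\sqrt2$-dimensional simples is self-dual, a product $\D\boxtimes\D'$ with $\D$ pointed, non-degenerate and of odd order is again a slightly degenerate generalized Tambara--Yamagami category with $|\Gamma|=2$ and still has no Ising subcategory (an Ising subcategory would be generated by a self-dual $\sqrt2$-dimensional simple $g\boxtimes X$, forcing $g^2=\1$ and $X\cong X^*$), yet it is not prime; ruling this configuration out is the real content of the step, and it is precisely what \cite[Theorem 5.11]{Dong2018extensions} supplies in the paper's argument. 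The auxiliary reductions are also shakier than you suggest: ``absorbing the odd factors'' changes the category, and primality of the $2$-primary factor does not transfer back to $\C$; and the claim that every non-pointed modular generalized near-group subcategory contains an Ising subcategory is asserted via \cite{Thornton2012Generalized} without argument. So the proof is fine up to and including the elimination of $|\Gamma|=4$, but the dichotomy (1)/(2) is not established; you should either import \cite[Theorem 5.11]{Dong2018extensions} as the paper does or supply a genuine proof of the prime case.
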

\begin{proof}
Since we assume that $(k_1,\cdots,k_n)=(0,\cdots,0)$, the adjoint subcategory $\C_{ad}$ is generated by $\Gamma$ and $\FPdim(X)=\sqrt{|\Gamma|}$ for all non-invertible simple object $X$ of $\C$. In particular, $\C$ is a generalized Tambara-Yamagami fusion category. By \cite[Proposition 5.2(ii)]{natale2013faithful}, we have
\begin{equation}\label{order}
\begin{split}
|\mathcal{U}(\C)|=2[G:\Gamma].
\end{split}
\end{equation}
By Proposition \ref{Cpt_of_sligdegen}, $|G|=2|\mathcal{U}(\C)|$ or $|G|=|\mathcal{U}(\C)|$.

\medbreak
Case $|G|=2|\mathcal{U}(\C)|$. In this case, equality (\ref{order}) implies that $|\Gamma|=4$. Proposition \ref{Cpt_of_sligdegen} shows that in our case $\C_{ad}$ contains the M\"{u}ger center $\svect$ of $\C$. Let $\delta$ be the invertible simple object generating $\svect$. Then we may write $\Gamma=\{\1,\delta,g,h\}$. Hence $X\otimes X^*=\1\oplus \delta\oplus g\oplus h$ for any non-invertible simple object $X$. In particular, $\dim\Hom(\delta\otimes X,X)=\dim\Hom(\delta,X\otimes X^*)=1$ shows that $\delta\otimes X\cong X$, which contradicts \cite[Proposition 2.6(i)]{etingof2011weakly}. So we can discard this case.

\medbreak
Case $|G|=|\mathcal{U}(\C)|$. In this case, equality (\ref{order}) implies that $|\Gamma|=2$. Hence $\C$ is an extension of a rank $2$ pointed fusion category. The result then follows from \cite[Theorem 5.11]{Dong2018extensions}.
\end{proof}

In fact, Remark \ref{T_Y}(1) implies that Lemma \ref{slightly-deg1} classifies  slightly degenerate generalized Tambara-Yamagami fusion categories.

\begin{lemma}\label{slightly-deg2}
Let $\C$ be a braided generalized near-group fusion category of type $(G,\Gamma,k_1,\cdots,k_n)$. Assume that $(k_1,\cdots,k_n)\neq(0,\cdots,0)$ and $\C$ is slightly degenerate. Then $\C$ is exactly one of the following.

(1)\, $\C\cong \C_{ad}\boxtimes \C_{pt}$,  where $\C_{ad}$ is a Yang-Lee category.

(2)\, $\C\cong\C_{ad}\boxtimes \B$,  where $\C_{ad}$ is a slightly degenerate fusion category of the form $\C(\mathfrak{psl}_2,q^t,8)$ with $q=e^{\frac{\pi i}{8}}$ and $(t,2)=1$, $\B$ is a non-degenerate pointed fusion category.
\end{lemma}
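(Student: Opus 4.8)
The plan is to run a case analysis driven by Lemma \ref{Cpt_of_sligdegen}, exactly as in the proof of Lemma \ref{slightly-deg1}, but now exploiting that the adjoint subcategory is forced to be small by Lemma \ref{dimuc} and Remark \ref{remark1}(2). First I would record the structural constraints: since $(k_1,\dots,k_n)\neq(0,\dots,0)$, Theorem \ref{categorytype}(2) applies, so every component $\C_g$ of the universal grading has the form $\{\alpha_g,\alpha_g\otimes Y_1,\dots,\alpha_g\otimes Y_s\}$ with $\Irr(\C_{ad})=\{\1,Y_1,\dots,Y_s\}$; moreover by Remark \ref{remark1}(2) the universal grading group of $\C_{ad}$ is trivial, i.e. $(\C_{ad})_{ad}=\C_{ad}$, so $\C_{ad}$ is a non-pointed fusion category equal to its own adjoint. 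Then I would split according to the dichotomy of Lemma \ref{Cpt_of_sligdegen}: either $\FPdim(\C_{pt})=|\mathcal{U}(\C)|$, or $\FPdim(\C_{pt})=2|\mathcal{U}(\C)|$ with $\mathcal{Z}_2(\C)\subseteq\mathcal{Z}_2(\C_{ad})=\mathcal{Z}_2(\C_{ad}')$.

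In the first case, $\FPdim(\C_{pt})=|\mathcal{U}(\C)|$ means every component carries exactly one invertible object, so Lemma \ref{dimuc} applies directly and $\C_{ad}$ is a Yang-Lee category. A Yang-Lee category is modular, hence non-degenerate, so by Theorem \ref{MugerThm} we get $\C\cong\C_{ad}\boxtimes\C_{ad}'$ as braided categories. Since $\C$ is slightly degenerate and $\C_{ad}$ is non-degenerate, $\mathcal{Z}_2(\C)=\mathcal{Z}_2(\C_{ad})\boxtimes\mathcal{Z}_2(\C_{ad}')=\vect\boxtimes\mathcal{Z}_2(\C_{ad}')$, so $\C_{ad}'$ is slightly degenerate. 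Counting Frobenius-Perron dimensions: $\FPdim(\C)=[G:\Gamma](\FPdim(X)^2+|\Gamma|)$ by Proposition \ref{normalsubgroup}(3), and since in this case $\C_{ad}$ has rank $2$ with the unique non-invertible object of dimension the golden ratio $\varphi$, one checks $\C_{ad}'$ contains no non-invertible object (any such would, together with the transitivity, force extra non-invertible simples in $\C$ beyond those accounted for). Thus $\C_{ad}'$ is pointed, giving conclusion (1) with $\C_{pt}=\C_{ad}'$ — note $\C_{pt}$ being slightly degenerate is automatic here but one should double-check whether the statement intends $\B=\C_{pt}$ exactly or allows a further splitting; I would phrase it as $\C\cong\C_{ad}\boxtimes\C_{pt}$.

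In the second case, $\FPdim(\C_{pt})=2|\mathcal{U}(\C)|$, so by Theorem \ref{categorytype}(2) each component $\C_g$ contains exactly two invertible objects; together with the fact that every component has the same type and rank $s+1$, and that the number of non-invertible simples is $[G:\Gamma]s\le|G|$, one deduces that $\C_{ad}$ has rank $s+1$ with exactly two invertibles, i.e. $s$ non-invertible simples and $|G(\C_{ad})|=2$. The relation $\mathcal{Z}_2(\C)\subseteq\mathcal{Z}_2(\C_{ad})=\mathcal{Z}_2(\C_{ad}')$ forces $\C_{ad}$ itself to be slightly degenerate (its Müger center is $\svect$, generated by the nontrivial invertible $\delta$). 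Now $\C_{ad}$ is a slightly degenerate fusion category, equal to its own adjoint, with $G(\C_{ad})\cong\Zz/2$; invoking the classification of such categories (this is where I would cite the relevant structure result — the analogue of \cite[Theorem 5.11]{Dong2018extensions} for the non-TY case, or directly the classification of slightly degenerate categories whose adjoint is themselves with a $\Zz/2$ of invertibles), $\C_{ad}\cong\C(\mathfrak{psl}_2,q^t,8)$ with $q=e^{\pi i/8}$, $(t,2)=1$. This category is a $\Zz/2$-extension of a modular (hence non-degenerate) category $\C(\mathfrak{psl}_2,q^t,8)_{ad}$, and its Müger centralizer $\C_{ad}'$ in $\C$: here I would argue $\C_{ad}'$ is non-degenerate and pointed by a dimension count and the constraint that $\C_{ad}\cap\C_{ad}'=\mathcal{Z}_2(\C_{ad})=\svect$, then use the generalized Müger decomposition to write $\C\cong\C_{ad}\boxtimes_{\svect}\C_{ad}'$; since $\mathcal{Z}_2(\C_{ad})=\svect\subseteq\C_{ad}$, and after absorbing the shared $\svect$ one extracts a genuine Deligne factor $\B$ that is non-degenerate pointed, yielding conclusion (2). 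Finally I would verify no other case survives: the possibility that $\C_{ad}$ has rank larger than $3$ is excluded because each component has only $s+1$ simples and at most $|G|$ non-invertibles total, which together with the two listed values of $\FPdim(\C_{pt})$ pins $\C_{ad}$ down to rank $2$ or to the rank-$3$ $\C(\mathfrak{psl}_2,q^t,8)$.

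The main obstacle I expect is the second case: proving that $\C_{ad}$ must be precisely $\C(\mathfrak{psl}_2,q^t,8)$ rather than merely "a slightly degenerate category that is its own adjoint with $\Zz/2$ invertibles," and then cleanly peeling off the non-degenerate pointed Deligne factor $\B$ via the Drinfeld–Gelaki–Nikshych–Ostrik decomposition while keeping track of the shared $\svect$. The Frobenius-Perron dimension bookkeeping — relating $\FPdim(\C)$, $[G:\Gamma]$, $|\Gamma|$, $|\mathcal{U}(\C)|$ and $\FPdim(\C_{ad})$ through Proposition \ref{normalsubgroup}(3) and \eqref{FPdimgrading} — is routine but must be done carefully to rule out spurious configurations and to confirm that the centralizer $\C_{ad}'$ has the right dimension to be pointed.
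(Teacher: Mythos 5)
Your overall strategy matches the paper's: split on the dichotomy of Lemma \ref{Cpt_of_sligdegen}, settle the case $\FPdim(\C_{pt})=|\mathcal{U}(\C)|$ via Lemma \ref{dimuc} and Theorem \ref{MugerThm} (the paper simply quotes $\C_{ad}'=\C_{pt}$ from \cite[Corollary 3.29]{drinfeld2010braided} rather than your ad hoc dimension count, but that part is fine), and in the other case identify $\C_{ad}$ as a small slightly degenerate category and peel off a pointed factor. The gaps are all in the second case. (i) When $\FPdim(\C_{pt})=2|\mathcal{U}(\C)|$ each component has two invertibles and either one or two non-invertible simples; you never actually eliminate the first subcase (rank-$3$ adjoint). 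The paper kills it with Lemma \ref{slight_degenerate}: since $\svect\subseteq\Z_2(\C_{ad})$, the number of simples of each Frobenius--Perron dimension in $\C_{ad}$ must be even, which one non-invertible simple violates. Relatedly, you call $\C(\mathfrak{psl}_2,q^t,8)$ a rank-$3$ category and claim rank $>3$ is excluded; in fact it has rank $4$ (two invertibles, two non-invertibles), and it is precisely as a rank-$4$ slightly degenerate category that $\C_{ad}$ is identified via \cite[Theorem 3.1]{bruillard2017classification}. (ii) Your assertion that $\Z_2(\C)\subseteq\Z_2(\C_{ad})$ ``forces'' $\C_{ad}$ to be slightly degenerate is not automatic: a priori $\Z_2(\C_{ad})$ could contain a non-invertible simple, and since $\delta\otimes Y_1\cong Y_2$ it would then be all of $\C_{ad}$, making $\C_{ad}$ symmetric. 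The paper excludes this by observing that a symmetric non-Tannakian category of dimension $>2$ is a $\Zz_2$-extension of its Tannakian part \cite[Corollary 2.50]{drinfeld2010braided}, contradicting the triviality of $\mathcal{U}(\C_{ad})$ from Remark \ref{remark1}(2). This step is missing from your sketch.

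(iii) Your final move, a relative product $\C_{ad}\boxtimes_{\svect}\C_{ad}'$ with the shared $\svect$ ``absorbed,'' is machinery the paper neither develops nor cites, and it is the wrong lever here because $\C_{ad}$ is degenerate and Theorem \ref{MugerThm} cannot be applied to it. The paper instead notes $\C_{ad}'=\C_{pt}$ is slightly degenerate, splits $\C_{pt}\cong\svect\boxtimes\B$ with $\B$ non-degenerate pointed by \cite[Proposition 2.6(ii)]{etingof2011weakly}, applies Theorem \ref{MugerThm} to the non-degenerate $\B$ to get $\C\cong\B\boxtimes\B'$, and then identifies $\B'=\C_{ad}$ by counting ranks and invoking the minimality of $\C_{ad}$ among non-pointed subcategories (Remark \ref{remark1}(1)). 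You would need to replace your relative-product step by an argument of this kind for the proof to close.
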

\begin{proof}
By Proposition \ref{Cpt_of_sligdegen}, $\FPdim(\C_{pt})=|\mathcal{U}(\C)|$ or $\FPdim(\C_{pt})=2|\mathcal{U}(\C)|$.

Case $\FPdim(\C_{pt})=|\mathcal{U}(\C)|$. In this case, $\C_{ad}$ is a Yang-Lee category by Lemma \ref{dimuc}. Hence $\C\cong \C_{ad}\boxtimes \C_{ad}'$ by Theorem \ref{MugerThm}, where $\C_{ad}'=\C_{pt}$ by \cite[Corollary 3.29]{drinfeld2010braided}. Hence $\C\cong \C_{ad}\boxtimes \C_{pt}$. This proves Part (1).

\medbreak
Case $\FPdim(\C_{pt})=2|\mathcal{U}(\C)|$. By Theorem \ref{categorytype}, every component $\C_g$ of the universal grading of $\C$ at least has one invertible simple object. Moreover, every component $\C_g$ admits the same type. Hence every component $\C_g$ exactly contains two invertible simple objects.

By Proposition \ref{normalsubgroup}, the number of non-isomorphic non-invertible simple objects is not more than the order of $G$. Hence the number of non-isomorphic non-invertible simple objects in $\C_g$ is $1$ or $2$.

\medbreak
If the first case holds true then $\C_{ad}$ is a fusion category of rank $3$. By Lemma \ref{Cpt_of_sligdegen}, the M\"{u}ger center of $\C_{ad}$ contains the category $\svect$. This contradicts Lemma \ref{slight_degenerate} which says that the rank of $\C_{ad}$ should be even.

\medbreak
If the second case holds true then $\C_{ad}$ is a rank $4$ fusion category. Let $\delta$ be the non-trivial invertible simple object in $\C_{ad}$, and $Y_1,Y_2$ be the non-invertible simple objects in $\C_{ad}$. Then $\delta$ generates the category $\svect$ by Lemma \ref{Cpt_of_sligdegen}(2). By \cite[Lemma 5.4]{muger2000galois}, $\delta\otimes Y_i$ is not isomorphic to $Y_i$ for $i=1,2$. Hence $G[Y_i]$ is trivial and $\delta\otimes Y_i\cong Y_j$ for $i\neq j$.

The fact obtained above implies that if the M\"{u}ger center $\Z_2(\C_{ad})$ of $\C_{ad}$ contains $Y_1$ or $Y_2$ then $\Z_2(\C_{ad})=\C_{ad}$ and hence $\C_{ad}$ is symmetric. Since $\C_{ad}$ contains $\svect$, $\C_{ad}$ is not Tannakian. In addition, $\FPdim(\C_{ad})>2$. Hence $\C_{ad}$ should admit a $\mathbb{Z}_2$-extension of a Tannakian subcategory by \cite[Corollary 2.50]{drinfeld2010braided}. This contradicts Remark \ref{remark1} which says the universal grading group of $\C_{ad}$ is trivial. This proves that $\Z_2(\C_{ad})=\svect$ and hence $\C_{ad}$ is slightly degenerate. By \cite[Theorem 3.1]{bruillard2017classification}, $\C_{ad}$ is a fusion category of the form $\C(\mathfrak{psl}_2,q^t,8)$ with $q=e^{\frac{\pi i}{8}}$ and $(t,2)=1$.

By Lemma \ref{Cpt_of_sligdegen}(2) and the arguments above, $\mathcal{Z}_2(\C_{ad})=\mathcal{Z}_2(\C_{ad}^{'})=\svect$. On the other hand, \cite[Proposition 3.29]{drinfeld2010braided} shows that $\C_{ad}^{'}=\C_{pt}$. Hence $\C_{pt}$ is slightly degenerate and admits a decomposition $\C_{pt}\cong\svect\boxtimes\B$ by \cite[Proposition 2.6(ii)]{etingof2011weakly}, where $\B$ is a non-degenerate pointed fusion category. So $\C$ admits a decomposition $\C\cong\B\boxtimes\B'$ by Theorem \ref{MugerThm}. Counting rank and Frobenius-Perron dimensions of simple objects on both sides, we obtain that $\B'$ is a rank $4$ non-pointed fusion category. By Remark \ref{remark1}, $\C_{ad}$ is the smallest non-pointed fusion subcategory of $\C$. Hence $\C_{ad}=\B'$. This proves Part (2).
\end{proof}

Combing Lemma \ref{slightly-deg1} and \ref{slightly-deg2}, we obtain the classification of slightly degenerate generalized near-group fusion categories.

\begin{theorem}
Let $\C$ be a slightly degenerate generalized near-group fusion category. Then $\C$ is exactly one of the following::

(1)\,  $\C\cong \I\boxtimes \B$, where $\I$ is an Ising category, $\B$ is a slightly degenerate pointed fusion category.

(2)\, $\C\cong \C_{ad}\boxtimes \C_{pt}$,  where $\C_{ad}$ is a Yang-Lee category.

(3)\, $\C\cong\C_{ad}\boxtimes \B$,  where $\C_{ad}$ is a slightly degenerate fusion category of the form $\C(\mathfrak{psl}_2,q^t,8)$ with $q=e^{\frac{\pi i}{8}}$ and $(t,8)=1$, $\B$ is a non-degenerate pointed fusion category.

(4)\,  $\C$ is generated by a $\sqrt{2}$-dimensional simple object. In this case, $\C$ is prime.
\end{theorem}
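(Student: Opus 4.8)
The plan is to deduce the final theorem directly by combining Lemmas~\ref{slightly-deg1} and \ref{slightly-deg2}, which together exhaust the two mutually exclusive possibilities for the data of a slightly degenerate generalized near-group fusion category $\C$, namely whether $(k_1,\dots,k_n)=(0,\dots,0)$ or not. First I would observe that by Remark~\ref{T_Y}(1) the vanishing case is precisely the generalized Tambara--Yamagami case, and Lemma~\ref{slightly-deg1} tells us that in this case $\C$ is either $\I\boxtimes\B$ with $\I$ Ising and $\B$ slightly degenerate pointed (yielding item (1) of the theorem), or $\C$ is prime and generated by a $\sqrt{2}$-dimensional simple object (yielding item (4)). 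So the entire content of the $(k_1,\dots,k_n)=(0,\dots,0)$ branch is already packaged.

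Next I would turn to the case $(k_1,\dots,k_n)\neq(0,\dots,0)$, where Lemma~\ref{slightly-deg2} gives exactly two outcomes: either $\C\cong\C_{ad}\boxtimes\C_{pt}$ with $\C_{ad}$ a Yang--Lee category (item (2)), or $\C\cong\C_{ad}\boxtimes\B$ with $\C_{ad}$ slightly degenerate of the form $\C(\mathfrak{psl}_2,q^t,8)$, $q=e^{\pi i/8}$, and $\B$ a non-degenerate pointed fusion category (item (3)). Since a slightly degenerate fusion category is in particular braided, and the data $(G,\Gamma,k_1,\dots,k_n)$ is well defined by the discussion in Section~\ref{sec3}, every slightly degenerate generalized near-group fusion category falls under exactly one of Lemma~\ref{slightly-deg1} or Lemma~\ref{slightly-deg2}. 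Concatenating the four conclusions gives the four items of the theorem, so the proof is essentially a one-line appeal: \emph{Apply Lemma~\ref{slightly-deg1} if $(k_1,\dots,k_n)=(0,\dots,0)$ and Lemma~\ref{slightly-deg2} otherwise.}

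The only genuine subtlety I anticipate is the coprimality condition appearing in item (3): Lemma~\ref{slightly-deg2}(2) states $(t,2)=1$ while the theorem states $(t,8)=1$. I would need to check whether this is a typo or whether there is an additional reduction: since $8=2^3$, the conditions $(t,2)=1$ and $(t,8)=1$ are in fact equivalent for integers $t$, so either formulation is correct and no extra argument is needed — I would simply note this equivalence in passing (or silently normalize to one of them). The other point worth a sentence is that the two cases are genuinely exhaustive and mutually exclusive, which is immediate from the definition of the type $(G,\Gamma,k_1,\dots,k_n)$. Thus there is no real obstacle; the work has been done in the two preceding lemmas, and the final theorem is their formal union.

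Should a referee want more than a pointer, I would spell out the trichotomy-to-quadrichotomy bookkeeping explicitly: list the cases $(k_1,\dots,k_n)=0$ vs.\ $\neq 0$, within the first invoke Lemma~\ref{slightly-deg1}'s dichotomy, within the second invoke Lemma~\ref{slightly-deg2}'s dichotomy, and match the four resulting descriptions to items (1)--(4), using the equivalence $(t,2)=1\iff(t,8)=1$ to reconcile the statements. No new categorical input is required beyond what is already assembled in Section~\ref{sec4}.
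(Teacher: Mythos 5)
Your proposal is correct and is essentially identical to the paper's own argument, which consists of the single sentence that the theorem follows by combining Lemmas~\ref{slightly-deg1} and \ref{slightly-deg2} according to whether $(k_1,\dots,k_n)$ vanishes. Your observation that $(t,2)=1$ and $(t,8)=1$ are equivalent correctly reconciles the minor notational discrepancy between Lemma~\ref{slightly-deg2}(2) and item (3) of the theorem.
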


\section*{Acknowledgements}
The research of the author is partially supported by the startup foundation for introducing talent of NUIST (Grant No. 2018R039) and the Natural Science Foundation of China (Grant No. 11201231).



\begin{thebibliography}{10}
\expandafter\ifx\csname url\endcsname\relax
  \def\url#1{\texttt{#1}}\fi
\expandafter\ifx\csname urlprefix\endcsname\relax\def\urlprefix{URL }\fi

\bibitem{bruillard2017classification}
P.~Bruillard, C.~Galindo, S.~Ng, J.~Y. Plavnik, E.~C. Rowell, Z.~D. Wang,
  Classification of super-modular categories by rank, preprint
  arXiv:1705.05293, 2017.

\bibitem{Dong2018extensions}
J.~Dong, H.~Sun, Braided extensions of a rank $2$ fusion category, preprint,
  arxiv:1808.04100, 2018.

\bibitem{drinfeld2010braided}
V.~Drinfeld, S.~Gelaki, D.~Nikshych, V.~Ostrik, On braided fusion categories
  {I}, Selecta Math., New Ser. 16~(1) (2010) 1--119.

\bibitem{etingof2005fusion}
P.~Etingof, D.~Nikshych, V.~Ostrik, On fusion categories, Ann. Math. 162~(2)
  (2005) 581--642.

\bibitem{etingof2011weakly}
P.~Etingof, D.~Nikshych, V.~Ostrik, Weakly group-theoretical and solvable
  fusion categories, Adv. Math. 226~(1) (2011) 176--205.

\bibitem{gelaki2017exact}
S.~Gelaki, Exact factorizations and extensions of fusion categories, Journal of
  Algebra 480 (2017) 505--518.

\bibitem{gelaki2008nilpotent}
S.~Gelaki, D.~Nikshych, Nilpotent fusion categories, Adv. Math. 217~(3) (2008)
  1053--1071.

\bibitem{liptrap2010generalized}
J.~Liptrap, Generalized {T}ambara-{Y}amagami categories, preprint
  arXiv:1002.3166.

\bibitem{muger2000galois}
M.~M{\"u}ger, Galois theory for braided tensor categories and the modular
  closure, Adv. Math. 150~(2) (2000) 151--201.

\bibitem{muger2003structure}
M.~M{\"u}ger, On the structure of modular categories, Proc. London Math. Soc.
  87~(02) (2003) 291--308.

\bibitem{natale2013faithful}
S.~Natale, Faithful simple objects, orders and gradings of fusion categories,
  Algebr. Geom. Topol. 13 (2013) 1489--1511.

\bibitem{ostrik2003fusion}
V.~Ostrik, Fusion categories of rank 2, Math. Res. Lett. 10~(2) (2003)
  177--183.

\bibitem{Ostrik2003}
V.~Ostrik, Module categories over the {D}rinfeld double of a finite group,
  Internat. Math. Res. Notices 2003~(27) (2003) 1507--1520.

\bibitem{siehler2003near}
J.~Siehler, Near-group categories, Algebr. Geom. Topol. 3 (2003) 719--775.

\bibitem{Thornton2012Generalized}
J.~E. Thornton, Generalized near-group categories, PhD thesis, University of
  Oregon, 2012.

\end{thebibliography}

\end{document}